\documentclass[12pt, a4paper, english]{article}		
\usepackage[T1]{fontenc}
\usepackage{babel}		
\usepackage{fancyhdr}		
\usepackage[utf8]{inputenc}
\usepackage{amsmath}  
\usepackage{amsthm}		
\usepackage{amsfonts}		
\usepackage{amssymb} 
\usepackage{enumerate} 
\usepackage{graphicx} 
\usepackage{indentfirst}
\usepackage{url}
\usepackage[all]{xy}
\newtheorem{teorema}{Theorem}[section]
\newtheorem{lema}[teorema]{Lemma}
\newtheorem{proposicio}[teorema]{Proposition}
\newtheorem{corolari}[teorema]{Corollary}

\theoremstyle{definition}
\newtheorem{definicio}[teorema]{Definition}
\newtheorem{exemple}[teorema]{Example}

\theoremstyle{remark}
\newtheorem*{nota}{Remark}

\DeclareMathOperator{\Ker}{Ker}

\DeclareMathOperator{\Soc}{Soc}

\DeclareMathOperator{\Ret}{Ret}

\newcommand{\bigcdot}{\boldsymbol{\cdot}}

\newcounter{cond}

\title{Addendum/Corrigendum to\\
{\Large On solubility of skew left braces and solutions of the Yang-Baxter equation}}
\author{A. Ballester-Bolinches \thanks{Departament de Matem\`atiques, Universitat de Val\`encia, Av. Vicent Andrés Estellés, 19, 46100 Burjassot, Val\`encia, Spain, \texttt{Adolfo.Ballester@uv.es}, \texttt{Ramon.Esteban@uv.es}, \texttt{Vicent.Perez-Calabuig@uv.es}}\  \and R. Esteban-Romero\addtocounter{footnote}{-1}\footnotemark \and P. Jim\'enez-Seral \thanks{Departamento de Matem\'aticas, Universidad de Zaragoza, Pedro Cerbuna, 12, 50009 Zaragoza, Spain, \texttt{paz@unizar.es}}\and V. P\'erez-Calabuig\addtocounter{footnote}{-2}\footnotemark}

\date{}
\begin{document}
\maketitle

\begin{abstract}
In our previous work: Adv. Math. 455 (2024), no. 109880, solubility of solutions was introduced as an extension of solubility of skew braces in the classification context of non-degenerate solutions of the Yang-Baxter equation. One of our main results (Theorem~C) proved that a skew brace is soluble if, and only if, its associated solution is soluble. A minor step depending on the definition of homomorphism of solutions was overlooked. In this work, proof of Theorem~C is repaired by means of a new class of homomorphisms of solutions: i-homomorphisms of solutions. The importance of this new class is twofold: indecomposable solutions are characterised by means of i-simplicity of solutions, and i-kernels of i-homomorphisms generate ideals in structure skew braces of solutions. Hence, solubility of solutions is redefined as an opposite class of indecomposable solutions. The results obtained with this definition improve our previous outcomes: every soluble solution is proved to have a soluble structure skew brace, and consequently, Theorem~C still holds. Several results stemming from this new analysis are outlined.
\end{abstract}

\emph{Mathematics Subject Classification (2020):  16T25, 20F16, 81R50}  

\emph{Keywords:} Skew left braces, Yang-Baxter equation, solubility, indecomposable solutions, simple solutions.

\section{Introduction}
\emph{In this article, every (set-theoretic) solution $(X,r)$ of the Yang-Baxter equation (YBE) is denoted as $r(x,y) = (\lambda_x(y), \rho_y(x))$ for every $x,y\in X$, and is considered non-degenerate, i.e. $\lambda_x$ and $\rho_x$ are bijective for every $x\in X$.}

There is a wide consensus that the classification problem of solutions goes hand in hand with the development of a structural theory of skew braces. It is well-known that every skew brace provides a solution, and every solution $(X,r)$ has associated two fundamental skew braces that encode its behaviour: the structure skew brace $G(X,r)$ and the permutation skew brace $\mathcal{G}(X,r)$ of~$(X,r)$ ---$(X,r)$ is said to be \emph{injective} if $X$ is embedded in $G(X,r)$ through an injective map $\iota \colon X \rightarrow G(X,r)$. 

One of the main open problems is the classification of \emph{indecomposable solutions}, i.e. solutions which can not be written as a union of two proper subsolutions. This is an outstanding question which is a long way off to be solved (see~\cite{CastelliCatinoPinto19, CedoOkninski23, ColazzoFerraraTrombetti25, JedlickaPilitowska23}). A possible approach consists of analysing solutions that do not admit any non-trivial epimorphic image ---a homomorphism of solutions $(X,r)$ and $(Y,s)$ is a map $f\colon X \rightarrow Y$ which makes compatible the $\lambda$ and $\rho$ components of $r$ and~$s$. These are called \emph{simple solutions}, and structural analysis of associated structure and permutation skew braces have provided significant insight into the classification and construction of such solutions (see \cite{CedoOkninski21,ColazzoJespersKubatVanAntwerpen25}). In particular, the study of simple solutions naturally leads to consider  \emph{simple skew braces}, i.e. those ones that do not admit non-trivial quotients. One can check that injective solutions $(X,r)$ with simple structure skew braces $G(X,r)$ are simple (see Proposition~\ref{prop:G_Xsimple->Xsimple-INJECTIVE}), and simple solutions $(X,r)$ with $|X|\neq 2$ are indecomposable. Nevertheless, the converses of these implications are not true in general: see Example~\ref{ex:contraex-Xsimple-GXnosimple} for the former, and for the latter, every indecomposable solution $(X,r)$ with finite free-square cardinality is not simple  (see~\cite{CedoOkninski23}). Hence, new tools are required to characterise indecomposability.


Following \cite{BallesterEstebanJimenezPerezC24-solubleskewbraces},  in order to classify minimal simple skew braces, solubility of skew braces was defined as an opposite class of simplicity, i.e. skew braces with a rich ideal structure. Similarly, we proposed a definition of solubility of solutions as an antithesis of simple solutions: $(X,r)$ is soluble at $x_t\in X$ if there exist $X_{t} \subseteq \ldots \subseteq X_1 \subseteq X_0 = X$ with $X_{t} = \{x_{t}\}$ such that, for every $1\leq i \leq t$, there exists a homomorphism  $f_i \colon (X,r) \rightarrow (Y_i,s_i)$ satisfying
\begin{itemize}
\item $X_i \in X/\ker f_i$, here $\ker f_i$ is the equivalence relation given by: $x(\ker f_i)y$ if, and only if, $f_i(x) = f_i(y)$;
\item $f_i(X_{i-1}, r|_{X_{i-1}\times X_{i-1}}) = (Z_i,s_i)$ is a twist subsolution of $(Y_i, s_i)$,  i.e. $s_i(u,v) = (v,u)$ for every $u,v\in Z_i$.
\end{itemize}
Our main result in this sense was to prove that solubility of skew braces characterises the soluble character of a solution associated with a skew brace (see \cite[Theorem~C]{BallesterEstebanJimenezPerezC24-solubleskewbraces}).


The motivation of this work comes from~\cite{FerraraTrombettiTsang25}, where the authors ``have spotted an irreparable mistake in the proof of Theorem C'' in~\cite{BallesterEstebanJimenezPerezC24-solubleskewbraces}. We observed that a minor step depending on the definition of homomorphisms of solutions was overlooked. Thus, the main goal of this addendum/corrigendum is to repair the mistake in the proof of Theorem C by revisiting our previous definition of soluble solutions. To this end, we introduce and study what we call i-homomorphisms of solutions. They turn out to be essential to understand indecomposable solutions since i-simplicity is equivalent to indecomposability (Proposition~\ref{prop:indecomp-i-simple}), and i-kernels associated with i-homomorphisms induce ideals in the structure skew brace of a solution (see Theorem~\ref{teo:X0-kerf-ideal}). Then, solubility of solutions naturally emerges as a class of multidecomposable solutions opposed to indecomposable/i-simple ones (see Definition~\ref{def:soluble-solution} in Section~\ref{sec:soluble}). Furthermore, with this revision we improve our previous outcomes: every soluble solution has a soluble structure skew brace (see Theorem~\ref{teo:soluble-solution->GX-soluble}). As a corollary, we extend our previous Theorem~C (see Corollary~\ref{cor:soluble-brace-solution}): for every skew brace $B$ the following statements are equivalent
\begin{itemize}
\item the associated solution $(B,r_B)$ is a soluble solution;
\item $B$ is a soluble skew brace;
\item the associated structure skew brace $G(B,r_B)$  is soluble;
\item the associated permutation skew brace $\mathcal{G}(B,r_B)$ is soluble.
\end{itemize}
We also show that the converse of Theorem~\ref{teo:soluble-solution->GX-soluble} is not true in general (see Example~\ref{ex:multipermut-nosoluble}), but we prove a sufficient condition for the converse: see Theorem~\ref{teo:G_X-soluble+xinI->Xsoluble}. This result yields a new version of Theorem~D in~\cite{BallesterEstebanJimenezPerezC24-solubleskewbraces}. In particular, every non-degenerate solution $(X,r)$ with a soluble skew brace structure, and an element $x\in X$ such that $\lambda_x = \rho_x = \mathrm{id}_X$, is a soluble solution.

\emph{We shall adhere here to our previous notation and preliminaries of~\cite{BallesterEstebanJimenezPerezC24-solubleskewbraces}.}

\section{On homomorphisms of solutions of the YBE}
\label{sec:morphisms}

Let $(X,r)$ be a solution. To avoid notation clash, we may denote $\lambda^r$ and $\rho^r$ to emphasise that they are left and right components of~$r$. We say that $(X,r)$ is a \emph{singleton solution} if $X$ is a singleton set. Recall that $(X,r)$ is a \emph{twist solution} if $r(x,y) = (y,x)$ for every $x,y\in X$, i.e. $\lambda_x = \rho_x = \operatorname{id}_X$ for every $x \in X$. Moreover, $(Z,\left.r\right|_Z)$ is a \emph{subsolution} of $(X,r)$ if $r(Z,Z) = Z\times Z$.

Let $(B,\cdot)$ be a group. Denote by $\operatorname{Sym}_B$ the group of permutations of elements of~$B$. Suppose that there exist a homomomorphism $\lambda$ and an antihomomorphism $\rho$,
\[ \lambda \colon b\in (B,\cdot) \mapsto \lambda_b \in \operatorname{Sym}_B, \quad \rho \colon b \in (B,\cdot) \mapsto \rho_b\in \operatorname{Sym}_B,\]
i.e. a left, respectively right, action of $B$ on $B$, such that  $ab = \lambda_a(b)\rho_b(a)$ for every $a,b\in B$. Theorem~2 of~\cite{LuYanZhu00} ensures that $(B,+,\cdot)$ has a structure of skew brace with a sum defined as $a+b:= a\lambda_{a^{-1}}(b)$, for every $a,b\in B$. Moreover, $\lambda_a$ becomes an automorphism of $(B,+)$ for every $a \in B$, so that $\lambda$ defines an action of $(B,\cdot)$ in $(B,+)$. It turns out that $B$ provides a solution $(B,r_B)$, with $\lambda^{r_B} = \lambda$ and $\rho^{r_B} = \rho$. We call it the \emph{solution associated with~$B$}.

Every solution $(X,r)$ has associated the so-called structure group $G(X,r)$ defined as
\begin{equation}
G(X,r)  = \langle X \mid xy = \lambda^r_x(y)\rho^r_y(x), \,\forall\,x,y\in \! X\rangle. \label{eq:G(X,r)prod} 
\end{equation}
We denote it briefly as $G_X$. According to \cite[Proposition~5]{LuYanZhu00}, $(X,r)$ can be extended to a solution $(\tilde{X}, \tilde{r})$ with $\tilde{X} = X\cup X^{-1}$, and $\left.\lambda^{\tilde{r}}_{x^{\varepsilon}}\right|_X = (\lambda^r)_x^{\varepsilon}$ and $\left.\rho^{\tilde{r}}_{x^{\varepsilon}}\right|_X= (\rho^r)_x^{\varepsilon}$ for every $x\in X$, $\varepsilon = \pm 1$.  Then, proof of Theorem~4 in~\cite{LuYanZhu00} shows that $(\tilde{X}, \tilde{r})$ can be extended to a solution $(G_X, r_{G_X})$ 
so that $w w' = \lambda^{G_X}_w(w')\rho^{G_X}_{w'}(w)$ for every $w, w'\in G_X$. Applying Theorem~2 of~\cite{LuYanZhu00}, this defines a sum in $G_X$ providing a skew brace structure $G_X = (G_X,+,\cdot)$, and the defining relations~\eqref{eq:G(X,r)prod} are translated to the additive group as
\begin{equation}
\label{eq:G(X,r)sum}
(G_X,+) = \langle X \mid x + \lambda^r_x(y) = \lambda_x^r(y) + \lambda_{\lambda_x^r(y)}^r(\rho^r_y(x))\rangle.
\end{equation}
We call $(G_X,+,\cdot)$ the \emph{structure skew brace} of solution~$(X,r)$. Theorem~4 in~\cite{LuYanZhu00} states that $r_{G_X}(\iota \times \iota) = (\iota \times \iota)r$,  where $\iota\colon x\in X \mapsto \iota(x) = x\in G(X)$ is a canonical map (not necessarily injective) such that the following diagram commutes
\[ 
\xymatrix{X \times X \ar[d]_{\iota\times \iota} \ar[r]^{r}  & X \times X  \ar[d]^{\iota \times \iota} \\ G_X \times G_X  \ar[r]_{r_{G_X}} & G_X \times G_X}
\]
Indeed, $\iota$ can be extended to $\iota\colon \tilde{X}\rightarrow G_X$, with $\iota(x^{-1}) = \iota(x)^{-1}$ for every $x\in X$ (we also denote it by~$\iota$ for the sake of simplicity), so that $r_{G_X}(\iota \times \iota) = (\iota \times \iota)\tilde{r}$ also commutes. Therefore, for every $x,y \in X$  and $\varepsilon, \varepsilon' = \pm 1$, it follows that
\begin{align}
\label{eq:lambda-en-GX}
& \lambda^{G_X}_{x^{\varepsilon}}(y) = \lambda^{G_X}_{\iota(x)^{\varepsilon}}(\iota(y)) = \iota\left(\lambda_{x^{\varepsilon}}^{\tilde{r}}(y)\right) = \iota\left((\lambda^r)_x^{\varepsilon}(y)\right) = (\lambda^r)_x^{\varepsilon}(y) \in G_X; \\
\label{eq:rho-en-GX}
& \rho^{G_X}_{x^{\varepsilon}}(y) = \rho^{G_X}_{\iota(x)^{\varepsilon}}(\iota(y)) = \iota\left(\rho_{x^{\varepsilon}}^{\tilde{r}}(y)\right) = \iota\left((\rho^r)_x^{\varepsilon}(y)\right) = (\rho^r)_x^{\varepsilon}(y) \in G_X.
\end{align}

Moreover, $\lambda$ and $\rho$ maps of $G_X$ satisfy that $ww' = \lambda_w(w')\rho_{w'}(w)$ for every $w, w'\in G_X$. Thus, it holds that
\begin{align}
ww_1\cdots w_n & = \lambda_{w}(w_1)\rho_{w_1}(w)w_2\cdots w_n \nonumber \\
& = \lambda_{w}(w_1)\lambda_{\rho_{w_1}(w)}(w_2\cdots w_n) \rho_{w_2\cdots w_n}(\rho_{w_1}(w))  \nonumber \\
& = \lambda_{w}(w_1)\lambda_{\rho_{w_1}(w)}(w_2\cdots w_n) \rho_{w_1w_2\cdots w_n}(w); \nonumber
\end{align}
and it follows that
\begin{equation}
\label{eq:lambda-prod-GX}
\lambda_{w}(w_1\cdots w_n) = \lambda_{w}(w_1)\lambda_{\rho_{w_1}(w)}(w_2\cdots w_n)
\end{equation}
for every $w,w_1, \ldots, w_n \in G_X$. Similarly, since $\rho$ is an antihomomorphism:
\begin{equation}
\label{eq:rho-prod-GX}
\rho_{w}(w_1\cdots w_n) = \rho_{\lambda_{w_n}(w)}(w_1\cdots w_{n-1})\rho_{w}(w_n)
\end{equation}
for every $w,w_1, \ldots, w_n \in G_X$.
%

\bigskip

A \emph{homomorphism} of skew braces $B_1$ and $B_2$ is a map $f\colon B_1\rightarrow B_2$ such that $f(a+b) = f(a) + f(b)$ and $f(ab) = f(a)f(b)$ for every $a,b\in B_1$. As usual, we say that $f$ is \emph{epimorphism} if it is surjective. It turns out that the kernel of~$f$, $\Ker f = \{a \in B_1\mid f(a) = 0\in B_2\}$, is an ideal of~$B_1$. A skew brace $B$ is said to be \emph{simple} if it has exactly two ideals: $\{0\}$ and~$B$.

Following Theorem~4 of~\cite{LuYanZhu00}, the pair $(G_X,\iota)$ of a solution $(X,r)$ satisfies the following universal property: if $B$ is a skew brace and $f\colon X \rightarrow B$ is a map satisfying
\[ \xymatrix{X \times X \ar[d]_{f\times f} \ar[r]^{r}  & X \times X  \ar[d]^{f\times f} \\ B \times B  \ar[r]_{r_B} & B \times B}\]
then there exists a unique homomorphism $g\colon G_X \rightarrow B$ such that the following diagrams commute
\[ \xymatrix{X \ar[r]^{\iota} \ar[d]_{f} & G_X \ar[dl]^{g} \\B & } \qquad \xymatrix{G_X \times G_X \ar[d]_{g\times g} \ar[r]^{r_{G_X}}  & G_X \times G_X  \ar[d]^{g\times g} \\ B \times B  \ar[r]_{r_B} & B \times B}\]
In particular, given a skew brace $B$, if we take $\operatorname{id}\colon B \rightarrow B$ the identity map, then there exists a unique homomorphism $\pi_B\colon G_B \rightarrow B$ such that the following diagrams commute in both directions:
\[ \xymatrix{G_B \times G_B \ar@/^/[d]^{\pi_B\times \pi_B} \ar[r]^{r_{G_B}}  & G_B \times G_B  \ar@/^/[d]^{\pi_B\times \pi_B} \\ B \times B \ar@/^/[u]^{\iota \times \iota}  \ar[r]_{r_B} & B \times B \ar@/^/[u]^{\iota\times \iota}}\]

\begin{definicio}
Let $B$ be a skew brace. We call $\pi_B\colon G_B \rightarrow B$ the \emph{projection epimorphism of $G_B$ onto $B$}. It is the unique skew brace homomorphism satisfying that $\pi_B\iota = \mathrm{id}_B$, where $\iota\colon B \rightarrow G_B$ is the canonical map.
\end{definicio}

%

\begin{nota}
\label{nota:ker-iota-brace}
Let $B$ be a skew brace. Elements in $B$ can be seen in $G_B$ by means of $b\mapsto \iota(b) \in G_B$. In order to avoid notation clash between operations in $B$ and $G_B$, we denote $\bar{b}:= \iota(b) \in G_B$ for every element $b\in B$. Moreover, if $\bar{b}_1 = \iota(b_1) = \iota(b_2) = \bar{b}_2\in G_B$, for some $b_1,b_2\in B$, then $\bar b_1^{-1}\bar b_2 = 0 \in \Ker \pi_B$. Therefore, $\iota$ is injective as
\[ 0 = \pi_B(\bar b_1^{-1}\bar b_2) = \pi_B(\bar b_1)^{-1}\pi_B(b_2) = b_1^{-1}b_2.\]
\end{nota}

\begin{lema}
\label{lema:kerg-socle}
Let $B$ be a skew brace and let $\pi_B\colon G_B \rightarrow B$ be its associated projection epimorphism. Then, $\Ker \pi_B$ is an ideal of $G_B$ contained in $\Soc(G_B)$. In particular, $\langle \bar{0}\rangle_{\bigcdot}$ is a central ideal of $G_B$ which is isomorphic to an infinite cyclic group as an abelian skew brace.
\end{lema}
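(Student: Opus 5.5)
The plan is to show, for $a\in\Ker\pi_B$, that $\lambda^{G_B}_a=\rho^{G_B}_a=\mathrm{id}$, and then to derive additive centrality from the identity $ww'=\lambda_w(w')\rho_{w'}(w)$. That $\Ker\pi_B$ is an ideal is immediate, since it is the kernel of a skew brace homomorphism.

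\emph{Step 1: $\lambda_a=\mathrm{id}$ on $\Ker\pi_B$.} By \eqref{eq:lambda-en-GX}, $\lambda_{\bar b^{\pm1}}(\bar c)=\overline{\lambda_b^{\pm1}(c)}$. Since $\lambda$ is a homomorphism $(G_B,\cdot)\to\operatorname{Aut}(G_B,+)$, induction on the word length of $w$ in the generators $\bar b^{\pm 1}$ gives
\[\lambda_w(\bar c)=\overline{\lambda_{\pi_B(w)}(c)}\quad\text{for all } c\in B.\]
If $\pi_B(w)=0$, then $\lambda_w$ fixes every $\bar c$. These elements generate $(G_B,+)$ by \eqref{eq:G(X,r)sum}, and $\lambda_w$ is an additive automorphism, so $\lambda_w=\mathrm{id}$.

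\emph{Step 2: $\rho_a=\mathrm{id}$ on $\Ker\pi_B$.} This is the step I expect to be the main obstacle. The issue is that $\rho_w$ is not known to be additive, so the argument of Step 1 does not transfer directly. Since $\rho$ is an antihomomorphism and \eqref{eq:rho-en-GX} holds, the same induction gives $\rho_w(\bar c^{\pm1})=\overline{\rho_{\pi_B(w)}(c)}^{\,\pm1}$, so $\rho_a$ fixes all $\bar c^{\pm 1}$ whenever $a\in\Ker\pi_B$. To extend this to products I will use \eqref{eq:rho-prod-GX}:
\[\rho_a(w_1\cdots w_n)=\rho_{\lambda_{w_n}(a)}(w_1\cdots w_{n-1})\,\rho_a(w_n).\]
Here $\lambda_{w_n}(a)\in\Ker\pi_B$, because the kernel is an ideal and hence $\lambda$-invariant. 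An induction on $n$, carried out simultaneously for all elements of $\Ker\pi_B$, therefore shows that $\rho_a$ fixes every word, that is, $\rho_a=\mathrm{id}$.

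\emph{Step 3: additive centrality.} For $a\in\Ker\pi_B$ and $g\in G_B$, Step 2 gives $ga=\lambda_g(a)\rho_a(g)=\lambda_g(a)g$. Hence
\[g+a=g\lambda_{g^{-1}}(a)=\lambda_g\lambda_{g^{-1}}(a)\,g=ag.\]
On the other hand, Step 1 gives $a+g=a\lambda_{a^{-1}}(g)=ag$. So $a+g=g+a$, and together with $\lambda_a=\mathrm{id}$ this shows $a\in\Soc(G_B)$.

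\emph{Step 4: the ideal $\langle\bar0\rangle_{\bigcdot}$.} First, $\bar0\in\Ker\pi_B$ because $\pi_B(\bar 0)=0$. By \eqref{eq:lambda-en-GX}, $\lambda_{\bar b}(\bar0)=\overline{\lambda_b(0)}=\bar0$, so $\lambda_g(\bar0)=\bar0$ for all $g$. Then $g\bar0=\lambda_g(\bar0)g=\bar0 g$, so $\bar0$ is multiplicatively central. It is additively central by Step 3. On socle elements $+$ and $\cdot$ coincide, since $x+y=x\lambda_{x^{-1}}(y)=xy$. It follows that $\langle\bar0\rangle_{\bigcdot}$ is a $\lambda$-invariant subgroup that is central, hence normal, in both groups. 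It is therefore a central ideal and a trivial, i.e.\ abelian, brace.

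\emph{Step 5: infinite order.} Let $\mathbb Z$ carry the trivial brace structure, and consider $f\colon B\to B\times\mathbb Z$, $b\mapsto(b,1)$. Because $r_{B\times\mathbb Z}$ acts componentwise and $\lambda=\rho=\mathrm{id}$ on $\mathbb Z$, $f$ is compatible with the solutions. The universal property then yields a homomorphism $g\colon G_B\to B\times\mathbb Z$ with $g(\bar0^{\,n})=(0,n)$. Hence $\bar0$ has infinite order, and $\langle\bar0\rangle_{\bigcdot}\cong\mathbb Z$.
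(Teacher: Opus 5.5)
\textbf{There is a genuine gap in Step 2, in the base case for inverse letters.} You assert $\rho_w(\bar c^{\,-1})=\overline{\rho_{\pi_B(w)}(c)}^{\,-1}$ ``by the same induction''. But \eqref{eq:rho-en-GX} only gives $\rho_{\bar b^{\varepsilon}}$ on $\iota(B)$, not on $\iota(B)^{-1}$. Moreover, $\rho_w$ does not commute with multiplicative inversion, just as it is not additive.

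The formula is in fact false. In any skew brace one has $\rho_w(v^{-1})=\rho_{\lambda_{v^{-1}}(w)}(v)^{-1}$. In $G_B$ this gives $\rho_w(\bar c^{\,-1})=\overline{\rho_{\lambda_{c^{-1}}(\pi_B(w))}(c)}^{\,-1}$, not your expression. For a concrete case, take the brace $B=\mathbb{Z}/8$ with $a\circ b=a+b+2ab$. There $\lambda_1^{-1}(1)=3$, $\rho_3(1)=3$ and $\rho_1(1)=7$. Hence $\rho_{\bar 1}(\bar 1^{-1})=\bar 3^{-1}$, whereas your formula predicts $\bar 7^{-1}$, and these differ because $\iota$ is injective. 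Your Step 3, and with it the inclusion $\Ker\pi_B\subseteq\Soc(G_B)$, rests on $\rho_a=\mathrm{id}$, so this must be repaired.

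\textbf{The repair.} The special case you actually need ($a\in\Ker\pi_B$) is true and takes one line. This is exactly the device the paper uses for inverse letters. By \eqref{eq:rho-prod-GX},
\[
0=\rho_a(\bar c\,\bar c^{-1})=\rho_{\lambda_{\bar c^{-1}}(a)}(\bar c)\,\rho_a(\bar c^{-1})=\bar c\,\rho_a(\bar c^{-1}),
\]
since $\lambda_{\bar c^{-1}}(a)\in\Ker\pi_B$ and kernel elements fix $\iota(B)$ under $\rho$. Hence $\rho_a(\bar c^{-1})=\bar c^{-1}$. Equivalently, in the correct identity above the right-hand side becomes $\overline{\rho_0(c)}^{\,-1}=\bar c^{-1}$.

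With this fix your argument is complete, and it differs from the paper's in useful ways:
\begin{enumerate}
\item In Step 1 you use that $\lambda_w\in\operatorname{Aut}(G_B,+)$ and that $\iota(B)$ generates $(G_B,+)$. This sidesteps both inverse letters and the $\rho$-invariance of the kernel, which the paper's word-by-word argument via \eqref{eq:lambda-prod-GX} needs.
\item Step 3 proves directly that $\lambda_a=\rho_a=\mathrm{id}$ on an ideal forces it into the socle; the paper imports this from another reference.
\item Step 5 actually proves that $\bar 0$ has infinite order, which the paper states without argument.
\end{enumerate}
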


\begin{proof}
We write just $\lambda$ and $\rho$ (resp. $\lambda^G$ and $\rho^G$) for the left and right actions in~$B$ (resp.~$G_B$).

It suffices to check that $\lambda^{G}_w = \rho^{G}_w = \mathrm{id}_{G_B}$ for every $w\in \Ker \pi_B$ (see \cite[Proposition~5.3]{BallesterEstebanPerezC24-JH} for instance). Let $w\in \Ker \pi_B$, and write $w = \bar b_1^{\varepsilon_1}\cdots \bar b_r^{\varepsilon_r}$, for some $b_1, \ldots, b_n\in B$ and $\varepsilon_i = \pm 1$, $1\leq i \leq r$. Thus, by definition of $\pi_B$, it turns out that $\pi_B(w) = b_1^{\varepsilon_1}\cdots b_n^{\varepsilon_n} = 0\in B$. Then, it follows that
\begin{align*}
 \lambda^{G}_w(\bar b) & = \lambda^{G}_{\bar b_1^{\varepsilon_1}}\cdots \lambda_{\bar b_n ^{\varepsilon_n}}^{G}(\bar b) =  \lambda_{\bar b_1^{\varepsilon_1}}^{G}\cdots \lambda_{\bar b_{n-1}^{\varepsilon_{n-1}}}^{G}(\lambda_{\bar b_n^{\varepsilon_n}}^{G}(\bar b)) =  \lambda_{\bar b_1^{\varepsilon_1}}^{G}\cdots \lambda_{\bar b_{n-1}^{\varepsilon_{n-1}}}^{G}(\overline{\lambda_{b_n}^{\varepsilon_n}(b)})\\
& = \cdots \text{ (applying~\eqref{eq:lambda-en-GX} iteratively)} = \overline{\lambda_{b_1}^{\varepsilon_1}\cdots \lambda_{b_n}^{\varepsilon_n}(b)} =  \overline{\lambda_{b_1^{\varepsilon_1}\cdots b_n^{\varepsilon_n}}(b)} \\
& = \overline{\lambda_{\pi_B(w)}(b)} = \overline{\lambda_0(b)} = \bar b
\end{align*}
for every $b\in B$. Hence, for every $w\in \Ker \pi_B$, $\left.\lambda_w^G\right|_{\iota(B)} = \mathrm{id}_{\iota(B)}$.

Since $\Ker \pi_B$ is an ideal of $G_B$, it is $\lambda^G$ and $\rho^G$-invariant. Therefore, for every $w\in \Ker \pi_B$ and $b\in B$,  by~\eqref{eq:lambda-prod-GX}, it follows that
\[ 0 = \lambda^G_w(\bar b^{-1}\bar b) = \lambda_w^G(\bar b^{-1})\lambda_{\rho^G_{\bar b^{-1}}(w)}^G(\bar b) =  \lambda_w^G(\bar b^{-1}) \bar b\]
as $\rho^G_{\bar b^{-1}}(w)\in \Ker \pi_B$. Thus, $\lambda_w^G(\bar b^{-1}) = \bar b^{-1}$. Hence, using~\eqref{eq:lambda-prod-GX} iteratively, it follows that $\lambda_w^G(w') = w'$ for every $w \in \Ker\pi_B$ and $w'\in G_B$, i.e. $\lambda_w^{G} = \mathrm{id}_{G_B}$ for every $w\in \Ker \pi_B$.

Analogously, using~\eqref{eq:rho-en-GX} and~\eqref{eq:rho-prod-GX}, it also holds that $\rho_w^G = \mathrm{id}_{G_B}$ for every $w\in \Ker \pi_B$.

Finally, take $0\in B$. It holds that $\lambda_0  = \rho_0 = \mathrm{id}_B$, and $\lambda_b(0) = \rho_b(0) = 0$ for every $b\in B$. As a consequence, according to~\eqref{eq:G(X,r)prod} and~\eqref{eq:G(X,r)sum}, it turns out that $\bar{b}\bar{0} = \bar{0}\bar{b}$, and $\bar{b} + \bar{0} = \bar 0 + \bar b$, for every $b\in B$. Hence, $\langle \bar 0 \rangle_{\bigcdot}$ is an ideal which is an abelian skew brace isomorphic to an infinite cyclic group, and $\langle \bar  0 \rangle_{\bigcdot}$ is central in~$B$.
\end{proof}

Observe that if $B$ is a simple skew brace, then every ideal $I$ in $G_B$ satisfies either $I \subseteq \Ker \pi_B$ or $\pi_B(I) = B$. On the other hand, we have the following result as a corollary.

\begin{corolari}
\label{cor:G_B-maiSimple}
For every skew brace $B$, $G_B$ is not simple.
\end{corolari}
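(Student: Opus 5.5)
The plan is to exhibit a proper non-trivial ideal of $G_B$ directly from Lemma~\ref{lema:kerg-socle}. That lemma tells us that $I:=\langle \bar 0\rangle_{\bigcdot}$ is an ideal of $G_B$, and that as a skew brace it is abelian and isomorphic to an infinite cyclic group. So we only need to check that $I\neq\{0\}$ and $I\neq G_B$.

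For the first condition, note that $I$ is infinite cyclic. In particular $\bar 0$ has infinite order in $(G_B,\cdot)$, so $\bar 0\neq 1_{G_B}$ and $I\neq\{0\}$. (Here the identity of $G_B$ is the empty word, not $\bar 0$.)

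For the second condition, we show that $I\subsetneq G_B$. The key observation is that $I\subseteq\Ker\pi_B$, because $\pi_B(\bar 0)=0$ and the kernel is a subgroup. It then suffices to argue by cases on $B$:
\begin{itemize}
\item If $|B|\ge 2$, pick $b\neq 0$. Then $\pi_B(\bar b)=b\neq 0$, so $\bar b\notin\Ker\pi_B\supseteq I$, and hence $I\neq G_B$.
\item If $B=\{0\}$ is trivial, then $G_B=\langle \bar 0\rangle\cong\mathbb{Z}$ as a trivial brace. Here $I=G_B$, so a different ideal is needed. We take $\langle \bar 0^{2}\rangle_{\bigcdot}$, which is a proper non-trivial ideal: in a trivial brace over an abelian group every subgroup is an ideal. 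So $G_B$ is not simple in this case either.
\end{itemize}

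The main obstacle is just this bookkeeping: making sure $\bar 0$ is not the identity of $G_B$, and handling the degenerate case $B=0$ separately. The non-triviality of $\bar 0$ is exactly the infinite-cyclic assertion of Lemma~\ref{lema:kerg-socle}. If one wanted to verify it independently, one would use the universal property with the map $X=B\to(\mathbb{Z},+,+)$ sending every element to $1$. This map is compatible with $r_B$ versus the twist solution, since $r_B$ is a bijection $B\times B\to B\times B$ and both coordinates map to $1$. The induced homomorphism $G_B\to\mathbb{Z}$ sends $\bar 0\mapsto 1$, so $\bar 0$ has infinite order.
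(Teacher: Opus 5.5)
Your proof is correct and uses the same idea as the paper: both rest on the infinite cyclic ideal $\langle \bar 0\rangle_{\bigcdot}$ from Lemma~\ref{lema:kerg-socle}. Your case split ($\langle\bar 0\rangle_{\bigcdot}\subseteq\Ker\pi_B\neq G_B$ when $|B|\ge 2$, and $\langle \bar 0^{2}\rangle_{\bigcdot}$ when $B=\{0\}$) simply unwinds the paper's contradiction argument, namely that a simple $G_B$ would have to equal $\langle\bar 0\rangle_{\bigcdot}\cong\mathbb{Z}$, which is not simple; your degree-map argument via the universal property is a welcome addition, since it justifies the infinite order of $\bar 0$, which the lemma states without explicit justification.
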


\begin{proof}
Assume that $G_B$ is simple for some skew brace~$B$. By Lemma~\ref{lema:kerg-socle}, it follows that $G_B = \langle \bar 0 \rangle_{\bigcdot} = \langle 0 \rangle_+$. Therefore, $G_B$ is an abelian skew brace isomorphic to an infinite cyclic group which is not simple, a contradiction.
\end{proof}

%
%


\bigskip

The notion of a homomorphism of solutions was firstly introduced in~\cite{Gateva-IvanovaMajid07}. Let $(X,r)$ and $(Y,s)$ be solutions. A map $f\colon X \rightarrow Y$ is said to be a \emph{homomorphism} of solutions if it satisfies that $(f\times f) r = s (f \times f)$; or equivalently, if $f(\lambda^r_x(y)) = \lambda^s_{f(x)}(f(y))$ and $f(\rho^r_y(x)) = \rho^s_{f(y)}(f(x))$ for every $x,y\in X$. We call $f$ an \emph{epimorphism} if it is surjective.

\begin{nota}
\label{nota:tots_hom_epi}
Let $f\colon X \rightarrow Y$ be a homomorphisms of solutions $(X,r)$ and~$(Y,s)$. Since $f$ makes compatible $\lambda$ and $\rho$ maps of $r$ and~$s$, then $(f(X), \left.s\right|_{f(X)})$ is also a solution. 
\end{nota}

If $f\colon X \rightarrow Y$ is a homomorphism of solutions $(X,r)$ and $(Y,s)$, then we can consider an equivalence relation in $X$ given by the \emph{kernel relation of~$f$}: $x (\ker f) y$ if, and only if, $f(x) = f(y)$. Observe that $f$ is compatible with the relations defined in~\eqref{eq:G(X,r)prod} and~\eqref{eq:G(X,r)sum}. Thus, 
$f$ can be extended into an homomorphism of structure skew braces $\bar{f} \colon G_X \rightarrow G_Y$ so that $\bar{\iota}f = \bar{f}\iota$ commutes:
\[
\xymatrix{X \ar[d]^{\iota} \ar[r]^{f}  & Y \ar[d]^{\bar{\iota}} \\ G_X \ar[r]_{\bar{f}}  & G_Y }
\]
i.e. $\bar{f}(x) = \bar f(\iota(x)) = \bar{\iota}(f(x)) = f(x) \in G_Y$ for every $x\in X$. It follows that if $x (\ker f) y$, then $x^{-1}y\in \Ker \bar f$ as
\begin{equation}
\label{eq:x(ker f)y->Kerf}
f(x) = f(y) \Rightarrow \bar{f}(x^{-1}y) = \bar{f}(x)^{-1}\bar f(y) = f(x)^{-1}f(y) = 0 \in G_Y
\end{equation}
Recall that in a skew brace $B$, $\lambda_{b^{-1}}(-b)^{-1} = b$ for every $b\in B$. Thus, with the previous conditions, if $f(x) = f(y)$, then 
\[ f(x) = \lambda^{G_Y}_{f(x)^{-1}}(-f(y))^{-1} = f(\lambda^{G_X}_{x^{-1}}(-y)^{-1}),\]
and therefore, by~\eqref{eq:x(ker f)y->Kerf}, $x-y = x\lambda^{G_X}_{x^{-1}}(-y)$ with $x(\ker f) \lambda^{G_X}_{x^{-1}}(-y)^{-1}$.

\begin{lema}
\label{lema:kerbarf}
Let $f \colon X \rightarrow Y$ be an epimorphism of solutions $(X,r)$ and $(Y,s)$, with induced homomorphism $\bar f\colon G_X \rightarrow G_Y$. Then, $\Ker \bar f$ is the ideal generated 
\[ K:= \{x^{-1}y \mid \text{$x,y \in X$ with $x (\ker f) y$}\}.\]
\end{lema}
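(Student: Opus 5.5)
Let $I$ denote the ideal of $G_X$ generated by $K$. We prove $\Ker \bar f = I$ by showing both inclusions.

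\emph{Inclusion $I\subseteq \Ker\bar f$.} By~\eqref{eq:x(ker f)y->Kerf}, every element of $K$ lies in $\Ker\bar f$. Since $\Ker \bar f$ is an ideal of $G_X$, it contains the ideal generated by $K$.

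\emph{Inclusion $\Ker\bar f\subseteq I$.} We factor $\bar f$ through the quotient skew brace $G_X/I$. Let $p\colon G_X\to G_X/I$ be the canonical projection. Define $h\colon Y\to G_X/I$ by $h(f(x)) = p(\iota(x))$. This is well defined because $f$ is surjective and, whenever $f(x)=f(y)$, we have $x^{-1}y\in K\subseteq I$, so $p(\iota(x))=p(\iota(y))$.

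Next we check that $h$ intertwines $s$ with $r_{G_X/I}$. Given $u=f(x)$ and $v=f(y)$, we have $\lambda^s_u(v)=f(\lambda^r_x(y))$, so
\[ h(\lambda^s_u(v)) = p(\iota(\lambda^r_x(y))) = p(\lambda^{G_X}_{\iota(x)}(\iota(y))) = \lambda^{G_X/I}_{p\iota(x)}(p\iota(y)), \]
using~\eqref{eq:lambda-en-GX} and the fact that $p$ is a skew brace homomorphism. This step uses that the $\lambda$ and $\rho$ maps of a quotient skew brace are induced from those of $G_X$, because $p$ respects the multiplication and the sum. The $\rho$ case is identical using~\eqref{eq:rho-en-GX}. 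Thus $(h\times h)s = r_{G_X/I}(h\times h)$.

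By the universal property of $(G_Y,\bar\iota)$, there is a skew brace homomorphism $g\colon G_Y\to G_X/I$ with $g\bar\iota = h$. Then $g\bar f$ and $p$ are homomorphisms $G_X\to G_X/I$ that agree on $\iota(X)$, since
\[ g\bar f\iota(x) = g\bar\iota f(x) = h(f(x)) = p\iota(x). \]
Because $\iota(X)$ generates $G_X$ as a group, $g\bar f = p$. Consequently, if $w\in \Ker\bar f$, then $p(w) = g(\bar f(w)) = g(0) = 0$, so $w\in I$.

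The main point needing care is the compatibility of $h$ with the solution maps. Well-definedness is immediate from the choice of $K$, and the rest follows formally from the universal property together with the fact that $G_X$ is generated by $\iota(X)$.
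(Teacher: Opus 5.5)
Your proof is correct, but it takes a different route from the paper's.

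\textbf{The paper's route.} The paper proves $\Ker\bar f\subseteq I$ by combinatorics on presentations:
\begin{enumerate}
\item It writes $w_0\in\Ker\bar f$ as a word in $X^{\pm1}$.
\item It notes that the image word, multiplied by a product of conjugates of relators of $(G_Y,\cdot)$, freely reduces to the empty word.
\item It lifts the conjugators and relators to $G_X$. This uses surjectivity of $f$ and the fact that every relator of $Y$ is the image of a relator of $X$.
\item It observes that each cancellation $y^{-1}y$ or $yy^{-1}$ in $G_Y$ corresponds to a factor $x^{-1}z$ or $xz^{-1}$ with $f(x)=f(z)$, and such factors are trivial modulo $I$.
\end{enumerate}

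\textbf{Your route.} You factor $\bar f$ through $G_X/I$. Well-definedness of $h$ is exactly where $K\subseteq I$ and surjectivity of $f$ are used. Compatibility of $h$ with $s$ and $r_{G_X/I}$ holds because $p$, being a skew brace homomorphism, respects $\lambda$ and $\rho$. The universal property of $(G_Y,\bar\iota)$, together with the fact that $\iota(X)$ generates $(G_X,\cdot)$, then forces $g\bar f=p$.

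\textbf{What each buys.} Your argument is shorter and more rigorous. It avoids the somewhat informal bookkeeping of lifted words and matched cancellations. The paper's argument, on the other hand, only ever uses that $I$ is normal in $(G_X,\cdot)$. So it actually shows something stronger: the normal closure $N$ of $K$ in the multiplicative group already equals $\Ker\bar f$, and is therefore automatically an ideal. Your method can recover this too. Apply the universal property of $(G_Y,\cdot)$ as a presented group to the map $f(x)\mapsto \iota(x)N$ into $(G_X,\cdot)/N$. This is legitimate because the relations $xy=\lambda_x(y)\rho_y(x)$ already hold in $G_X$.
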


\begin{proof}
Write $I_K$ the ideal in $G_X$ generated by~$K$. Clearly, \eqref{eq:x(ker f)y->Kerf} yields one inclusion. For the other inclusion, take 
\[ R_{\mathsf{X}} := \{ \mathsf{u}\mathsf{v}\rho_{\mathsf{v}}^{\mathsf{r}}(\mathsf{u})^{-1}\lambda_{\mathsf{u}}^{\mathsf{r}}(\mathsf v)^{-1} \mid \mathsf{u,v} \in \mathsf{X}\},\] for each $\mathsf{X}\in \{X,Y\}$ with respectively $\mathsf{r}\in \{r,s\}$. Since $f$ is surjective and it makes $\lambda$ and $\rho$ maps of $r$ and $s$ compatible, it follows that for every $w\in R_Y$, there exists $u \in R_X$ such that $\bar f(u) = w$.


Let $w_0 \in \Ker \bar f$ and write $w_0 = x_1^{\varepsilon_1}\cdots x_n^{\varepsilon_n}$ for some $x_i \in X$, $\varepsilon_i =\pm 1$, $1\leq i \leq n$. Then,
\[ 0 = \bar f(w_0) = f(x_1)^{\varepsilon_1}\cdots f(x_n)^{\varepsilon_n} \in G_Y.\]
Thus, there exist $v_1, \ldots, v_m \in G_Y$ and $w_1, \ldots, w_m \in R_Y$ such that the word
\[
w':= f(x_1)^{\varepsilon_1} \cdots f(x_n)^{\varepsilon_n} (v_1 w_1 v_1^{-1}) \cdots (v_m w_m v_m^{-1})
\]
reduces recursively to $0$
by successive cancellations of subwords of the form either $yy^{-1}$ or $y^{-1}y$, with $y \in Y$.

Since $f$ is surjective, for each $1 \leq i \leq m$, there exists a word $z_i \in G_X$ such that
\[
\mathrm{length}(z_i) = \mathrm{length}(v_i)\quad\text{and} \quad \bar f(z_i) = v_i,
\]
where each letter of $z_i$ maps under $\bar f$ to the corresponding letter of $v_i$. Moreover, for every $1\leq i \leq m$, there exists $u_i \in R_X$ such that $f(u_i)=w_i$.

Thus, $w_0':= w_0(z_1u_1z_1^{-1})\cdots (z_mu_mz_m^{-1}) = w_0 \in G_X$ and $\bar f(w_0') = w' = 0 \in G_Y$, where $\mathrm{length}(w_0') = \mathrm{length}(w')$, and each letter of $w_0'$ maps under $\bar f$ to the corresponding letter of $w'$. Therefore, in $G_X/I_K$, the word $w_0'I_K$ recursively reduces to $0I_K$ following the same reduction procedure than $w'$: there is a one-to-one correspondence between each cancellation of the form $y^{-1}y \in G_Y$ with $x^{-1}z \in X$, where $f(x) = f(z) = y$, and similarly for $yy^{-1}$.

Hence, $w_0 = w_0'\in G_X$ and $w_0' \in I_K$, so that the other inclusion also holds.
\end{proof}

A non-singleton solution $(X,r)$ is said to be \emph{simple} if for every epimorphism $f\colon (X,r) \rightarrow (Y,s)$, either $f$ is bijective or $f$ is a constant map. In this context, it is natural to analyse the role of simplicity of structure skew braces.

\begin{proposicio}
\label{prop:G_Xsimple->Xsimple-INJECTIVE}
Let $(X,r)$ be a non-singleton injective solution with a simple structure skew brace~$G_X$. Then, $(X,r)$ is simple.
\end{proposicio}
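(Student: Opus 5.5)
We take an arbitrary epimorphism $f\colon (X,r)\rightarrow (Y,s)$ and show that $f$ is either bijective or constant. By the construction preceding Lemma~\ref{lema:kerbarf}, $f$ extends to a skew brace homomorphism $\bar f\colon G_X\rightarrow G_Y$ with $\bar f\iota=\bar\iota f$. Its kernel $\Ker\bar f$ is an ideal of $G_X$. Since $G_X$ is simple, either $\Ker \bar f=\{0\}$ or $\Ker\bar f=G_X$, and we treat the two cases separately.

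\emph{Case $\Ker\bar f=\{0\}$.} We show that $f$ is injective, which together with surjectivity gives bijectivity. Suppose $f(x)=f(y)$ for some $x,y\in X$. By \eqref{eq:x(ker f)y->Kerf}, $\iota(x)^{-1}\iota(y)\in\Ker\bar f$, so $\iota(x)=\iota(y)$ in $G_X$. Since $(X,r)$ is injective, $\iota$ is injective, hence $x=y$.

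\emph{Case $\Ker\bar f=G_X$.} Here $\bar f$ is trivial, so $\bar\iota(f(x))=\bar f(\iota(x))=0$ for all $x\in X$. Thus $\bar\iota$ is constant on $f(X)=Y$. To conclude that $f$ is constant we need $\bar\iota$ to be injective on $Y$, and this is the main obstacle: the hypotheses say nothing about injectivity of $(Y,s)$.

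To get around this, I would first use $\Ker\bar f=G_X$ to argue that $(Y,s)$ is very degenerate. Since $\bar\iota(Y)=\{0\}$, we have $G_Y=\langle \bar\iota(Y)\rangle$ trivial. However, $G_Y$ always maps onto the free abelian group on the $\lambda$-orbits (via the degree-type map sending each generator to its orbit class). Hence $G_Y$ trivial is impossible for nonempty $Y$, giving a contradiction in this case.

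So the second case cannot actually occur, unless $G_X$ is itself trivial. That is impossible: $G_X$ likewise surjects onto $\mathbb Z$, so it is nonzero. Moreover, $G_X$ is never simple in a degenerate way that forces $\Ker\bar f=G_X$. Consequently only the first case survives, every epimorphism is bijective, and $(X,r)$ is simple; the constant case is vacuous.

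The step to check carefully is the existence of the nontrivial map $G_Y\rightarrow\mathbb Z$, $y\mapsto 1$. It is compatible with the multiplicative relations \eqref{eq:G(X,r)prod}, since both sides have length $2$. Compatibility with the additive structure holds because it gives a homomorphism onto the trivial brace $\mathbb Z$ with $\lambda=\mathrm{id}$, and that is a solution morphism to the twist solution $\{1\}$, so the universal property applies.
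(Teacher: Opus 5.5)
Your proposal is correct and is essentially the paper's proof. Simplicity of $G_X$ forces $\Ker\bar f\in\{0,G_X\}$; the case $\Ker\bar f=0$ gives injectivity of $f$ through injectivity of $\iota$; and the case $\Ker\bar f=G_X$ is ruled out because $G_Y$ is nontrivial, which the paper only asserts and you justify correctly with the degree map $G_Y\to\mathbb{Z}$, $y\mapsto 1$.

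Two side remarks. First, your aside about a surjection onto the free abelian group on the $\lambda$-orbits is false for non-involutive solutions: in a trivial skew brace on a nonabelian group every $\lambda$-orbit is a singleton while $\rho_b(a)=b^{-1}ab$. Nothing in your argument depends on it. Second, the same degree map is a skew brace epimorphism $G_X\to\mathbb{Z}$ onto the trivial brace, so a simple $G_X$ would be isomorphic to $\mathbb{Z}$, which has the proper ideal $2\mathbb{Z}$. Hence the hypothesis can never hold and the statement is in fact vacuously true.
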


\begin{proof}
Assume that $\iota \colon X \rightarrow G_X$ is injective and $G_X$ is a simple skew brace. Let $f\colon (X,r) \rightarrow (Y,s)$ be an epimorphism of solutions with induced epimorphism $\bar f\colon G_X \rightarrow G_Y$. Since $G_X$ is simple, either $\Ker \bar f = G_X$ or $\Ker \bar f = 0$. The former is not possible, as the structure skew brace of a solution can not be a trivial zero skew brace. Therefore, $\Ker \bar f = 0$. 

If $f$ is not a bijection, then there exist $x,y\in X$ such that $f(x) = f(y)$. Thus, $x^{-1}y\in \Ker \bar f = 0$. Hence, $\iota(x) = \iota(y) \in G_X$, and therefore, $x = y \in X$, as $\iota$ is injective.
\end{proof}

The following example shows that the converse is false in general.

\begin{exemple}
\label{ex:contraex-Xsimple-GXnosimple}
Consider the set $X = \{1,2,3\}$. We define a Lyubashenko solution, i.e a solution given by two commutative permutations in $\Sigma_X$: $r(x,y) = (\sigma(y), \tau(x))$, where $\sigma = (1,2,3), \tau = (1,3,2) \in \Sigma_3$. Since $\tau = \sigma^{-1}$, one can check that $(X,r)$ is involutive, i.e. $r^{-1} = r$, and therefore,
\[ (G_X,+) = \langle X \mid x+y = y+x, \ x,y\in X\rangle\]
is a free abelian group over~$X$. On the other hand, $(G_X,\cdot)$ is given by 
\[ (G_X,\cdot) = \langle X \mid 1\cdot 1 = 2 \cdot 3, \  2\cdot 2 = 3\cdot 1, \  3\cdot 3 = 1\cdot 2 \rangle.\]
As a consequence, it follows that $(X,r)$ is injective as $\iota\colon X \rightarrow G_X$ is injective. It also holds that $1^3 = 2^3 = 3^3 \in G_X$ (e.g. from $2 = 1^{-1}\cdot 3 \cdot 3$, we can see that $1^3 = 3^3$). 

Take $u:= 1^3$. It turns out that $\langle u \rangle_{\bigcdot}$ is a normal subgroup as $xux^{-1} = x^{-1}ux = u$ for every $x\in X$. Moreover, applying~\eqref{eq:lambda-en-GX},
\begin{align*}
 1^3 & = 1\cdot (1 + \lambda_1^{G_X}(1)) = 1\cdot (1 + \lambda_1^r(1)) = 1\cdot (1 + \sigma(1)) = 1 \cdot (1+2) \\
 & = 1 \cdot 1 - 1 + 1 \cdot 2 = 1 + 2 -1 + 1 + \lambda_1^r(2) = 1 + 2 + \sigma(2) = \\
 & = 1 + 2 + 3,
\end{align*}
and $\lambda_u^G(x) = (\lambda_1^r)^3(x) = \sigma^3(x) = x$ for every $x\in X$. Thus, $\lambda_u^G = \mathrm{id}_G$, and therefore, $I:= \langle u\rangle_+ = \langle u \rangle_{\bigcdot}$ is a trivial subbrace of $G_X$, which is a normal subgroup both in $(G_X,\cdot)$ and $(G_X,+)$. Observe that, $I$ is $\lambda^G$-invariant as $\lambda_w(1+2+3) = 1+2+3$ for every $w\in G_X$. Hence, $0\neq I \neq G_X$ is an ideal of $G_X$, i.e. $G_X$ is not a simple skew brace.

On the other hand, assume that $f \colon (X,r) \rightarrow (Y,s)$ is an epimorphism with $Y = \{y_1,y_2\}$. Without loss of generality, suppose that $f(1) = f(2) = y_1$ and $f(3) = y_2$. Then, 
\[ \begin{array}{l}
 y_1 = f(2) = f(\lambda^r_1(1)) = \lambda^s_{y_1}(y_1) \\
 y_1 = f(1) = f(\lambda^r_1(3)) = \lambda^s_{y_1}(y_2)
 \end{array}\]
Therefore, $\lambda^s_{y_1}$ is not bijective and so, $(Y,s)$ is not a non-degenerate solution. Hence, for every epimorphism $f\colon (X,r) \rightarrow (Y,s)$, either $f$ is an isomorphism or $|Y| = 1$; that is, $(X,r)$ is a simple solution.
\end{exemple}

\section{On i-homomorphisms of solutions}
\label{sec:imorphisms}

Previous results show that simplicity of solutions exhibits a rigid behaviour that is not reflected in the ideal structure of the structure skew brace associated with a solution. Therefore, it is natural to consider a more comprehensive notion of homomorphism that enables us to get a tighter grip over 
the associated structure skew brace of a solution.


\begin{definicio}
\label{def:i-homomorf}
Let $(X,r)$ and $(Y,s)$ be solutions. We call a homomorphism of solutions $f\colon X \rightarrow Y$  an \emph{i-homomorphism}, if there exists an equivalence class $X_0\subseteq X$ of $\ker f$ such that for every equivalence class $Z$ of $\ker f$, it holds $r(X_0,Z) = Z \times X_0$ and $r(Z,X_0) = X_0\times Z$. We say $X_0$ is an \emph{i-kernel of~$f$}, and $f$ is said to be an \emph{i-epimorphism} if $f$ is also surjective.
\end{definicio}

The importance of i-homomorphisms of solutions is twofold: on the one hand, we will see that they turn out to be the right tool to characterise indecomposability, and on the other hand, i-kernels of i-homomorphisms will be associated with ideals of structure skew braces of solutions.


Recall that a solution $(X,r)$ is said to be \emph{decomposable} if there exist proper subsets $Y, Z \subseteq X$ such that $X = Y \cup Z$, and $r(Y,Y) = Y \times Y$ and $r(Z,Z) = Z \times Z$. Otherwise, if no such decomposition can be considered, $(X,r)$ is said to be \emph{indecomposable}. It is well-known that the only decomposable simple solution is the twist solution on two elements, and out of it, simplicity of solutions is a stronger property that does not characterise indecomposability.

The next proposition shows that i-homomorphisms characterise indecomposable solutions in terms of an associated \emph{i-simplicity}.

\begin{definicio}
Let $(X,r)$ be a non-singleton solution. We define $(X,r)$ to be \emph{i-simple} if $X$ is the unique i-kernel of every i-homomorphism of solutions $f\colon X \rightarrow Y$.
\end{definicio}


\begin{proposicio}
\label{prop:indecomp-i-simple}
Let $(X,r)$ be a solution such that $X$ is not a singleton set. Then, $(X,r)$ is indecomposable if, and only if, it is i-simple.
\end{proposicio}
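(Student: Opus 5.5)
The plan is to prove the two implications separately. In both directions the link is the same: an i-kernel $X_0$ of an i-homomorphism, and the partition $X = X_0 \sqcup (X\setminus X_0)$ into two pieces each stable under all the maps $\lambda_x$ and $\rho_x$.

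\emph{Not i-simple $\Rightarrow$ decomposable.} Suppose $f\colon X\to Y$ is an i-homomorphism with an i-kernel $X_0\neq X$, and put $Z := X\setminus X_0$, which is nonempty. For every class $Z'$ of $\ker f$, the condition $r(X_0,Z') = Z'\times X_0$ gives $\lambda_x(Z')\subseteq Z'$ and $\rho_{z}(X_0)\subseteq X_0$ for $x\in X_0$, $z\in Z'$. Likewise $r(Z',X_0) = X_0\times Z'$ gives $\lambda_z(X_0)\subseteq X_0$ and $\rho_x(Z')\subseteq Z'$.
\begin{itemize}
\item Taking $Z'=X_0$ shows $r(X_0,X_0)=X_0\times X_0$, so $X_0$ is a proper subsolution.
\item For $z\in Z$, the map $\lambda_z$ is a bijection of $X$ sending $X_0$ into $X_0$. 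I will check that $\lambda_z(X_0)=X_0$: this is automatic for finite $X_0$, and in general it should follow by also using the inverse relations coming from $\lambda_z^{-1}$ and the second condition. Granting this, $\lambda_z(Z)=Z$, and symmetrically $\rho_z(Z)=Z$. Hence $r(Z,Z)\subseteq Z\times Z$.
\end{itemize}
To upgrade the inclusion $r(Z,Z)\subseteq Z\times Z$ to an equality I will use bijectivity of $r$ for non-degenerate solutions, which is available in the setting of \cite{BallesterEstebanJimenezPerezC24-solubleskewbraces}. The same argument applied to the complement then shows that $Z$ is a subsolution. Thus $X = X_0\cup Z$ is a decomposition and $(X,r)$ is decomposable.

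\emph{Decomposable $\Rightarrow$ not i-simple.} First I will reduce the decomposition $X = Y\cup Z$ to a partition $X = Y\sqcup Y'$ into two nonempty subsets that are invariant under every $\lambda_x$ and $\rho_x$, $x\in X$. The natural candidate for $Y$ is an orbit of the permutation group $\langle \lambda_x,\rho_x \mid x\in X\rangle$ acting on $X$, taken inside a proper subsolution. Given such a partition, let $(T,t)$ be the twist solution on $T=\{a,b\}$ and define $f\colon X\to T$ by $f(Y)=a$ and $f(Y')=b$.
\begin{itemize}
\item Invariance of $Y$ and $Y'$ gives $f(\lambda_x(y)) = f(y) = \lambda^t_{f(x)}(f(y))$, and the same for $\rho$. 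Hence $f$ is a homomorphism whose kernel classes are exactly $Y$ and $Y'$.
\item Again by invariance, $r(Y,Y') = Y'\times Y$, $r(Y',Y)=Y\times Y'$ and $r(Y,Y)=Y\times Y$, where equality rather than inclusion uses bijectivity of $r$ as above.
\end{itemize}
So $Y$ is an i-kernel of $f$ different from $X$, and $(X,r)$ is not i-simple.

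The main obstacle is the reduction from the paper's definition of decomposability, a union of two proper subsolutions that may overlap, to a partition into two invariant pieces. I would first try the classical argument, as in Etingof--Schedler--Soloviev and Rump, that for non-degenerate solutions decomposability is equivalent to the action of $\langle\lambda_x,\rho_x\rangle$ on $X$ being intransitive; if needed, I would restrict attention to the setting (finite or bijective $r$) where that equivalence is standard. The secondary technical point is the equality $r(A,A)=A\times A$, as opposed to mere inclusion, for each invariant piece $A$, which again rests on bijectivity of $r$.
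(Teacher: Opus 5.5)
Your skeleton is the paper's. In one direction, the complement $Z=X\setminus X_0$ of an i-kernel $X_0\neq X$ gives a decomposition. In the other, a decomposition gives a map onto the two-element twist solution, with both pieces as i-kernels. The paper's terse ``by the non-degenerate property'' steps amount to your appeals to bijectivity of $r$, which the paper later cites from Jedli\v{c}ka--Pilitowska.

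In the first direction you can drop the unproved claim that $\lambda_z(X_0)=X_0$ ``should follow'' in general. The map $r$ sends $X_0\times X_0$, $X_0\times Z$ and $Z\times X_0$ onto $X_0\times X_0$, $Z\times X_0$ and $X_0\times Z$. Injectivity of $r$ therefore already forces $r(Z\times Z)\subseteq Z\times Z$, and surjectivity gives equality.

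The genuine gap is the step you yourself call the main obstacle, which you leave as a plan. Take overlapping proper subsolutions $Y,Z$ with $X=Y\cup Z$. You give no reason why an orbit of $\langle\lambda_x,\rho_x\mid x\in X\rangle$ through a point of $Y$ should stay inside $Y$. For $z\in Z\setminus Y$, the maps $\lambda_z,\rho_z$ are only known to preserve $Z$, and hence $Y\setminus Z$. They may move points of $Y\cap Z$ into $Z\setminus Y$, and nothing you have written prevents the orbit from being all of $X$. Appealing to the Etingof--Schedler--Soloviev/Rump equivalence between decomposability and intransitivity does not close this. That equivalence is for decompositions into \emph{disjoint} invariant subsets, so it does not by itself turn an overlapping union into a partition, which is exactly what you need.

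The paper never performs this reduction. Its map $f$ with $f(Y)=\{a\}$ and $f(Z)=\{b\}$ is well defined only when $Y\cap Z=\emptyset$. So the paper reads ``decomposable'' in the standard ESS sense of a disjoint union.

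Under that reading your obstacle disappears and no orbits are needed:
\begin{enumerate}
\item Take $Y'=Z=X\setminus Y$.
\item Since $(Y,r|_Y)$ and $(Z,r|_Z)$ are themselves non-degenerate solutions (automatic if $X$ is finite), each $\lambda_y,\rho_y$ with $y\in Y$ restricts to a bijection of $Y$. It therefore preserves $Z$, and symmetrically for $z\in Z$.
\item So both pieces are invariant under every $\lambda_x,\rho_x$, and the rest of your argument goes through verbatim.
\end{enumerate}
As it stands, your second direction is a complete proof only for disjoint decompositions.
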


\begin{proof}
Assume that $(X,r)$ is indecomposable, and let $f\colon X \rightarrow Y$ be a i-homomorphism between $(X,r)$ and a solution $(Y,s)$. Take $X_0\subseteq X$ an i-kernel of~$f$, and call $Z = X \setminus X_0$. Thus, $r(X_0,Z) = Z \times X_0$ and $r(Z,X_0) = X_0\times Z$. By the non-degenerate property of~$r$, it follows that $r(X_0, X_0) = X_0 \times X_0$ and $r(Z,Z) = Z \times Z$. Therefore, $X_0 = X$ as $(X,r)$ is indecomposable.

Suppose that $(X,r)$ is i-simple. Take $X = Y \cup Z$ for some $Y,Z \subseteq X$, such that $r(Y,Y) = Y \times Y$ and $r(Z,Z) = Z\times Z$. By the non-degenerate property of $(X,r)$, $r(Y,Z) = Z\times Y$ and $r(Z,Y) = Y\times Z$. Take the twist solution $(\{a,b\}, \tau)$ over $\{a,b\}$, and define $f\colon X \rightarrow \{a,b\}$ as $f(Y) = \{a\}$ and $f(Z) = \{b\}$. It follows that $f$ is an i-homomorphism of solutions with i-kernels $Y$ and~$Z$. By the i-simplicity of $(X,r)$, either $Y = X$ or $Z = X$. Hence, $(X,r)$ is indecomposable.
\end{proof}

Observe that if $(X,r)$ is an indecomposable solution, then the identity $\operatorname{id}_X\colon (X,r) \rightarrow (X,r)$ is an example of a homomorphism of solutions which is not an i-homomorphism. 

Now we turn our attention to i-kernels. Firstly, the following example shows that i-homomorphisms do not generate a unique i-kernel.

\begin{exemple}
\label{ex:multiples-kernels}
Let $(X,r)$ and $(Y,s)$ be twist solutions of the same cardinality. A permutation $\sigma \colon X \rightarrow Y$ is an i-homomorphism of solutions such that for every $x\in X$, $\{x\}$ is an i-kernel of~$\operatorname{id}_X$. And vice versa, assume that $f\colon X \rightarrow Y$ is an i-epimorphism of solutions $(X,r)$ and $(Y,s)$ such that $\{x\}$ is a kernel of~$f$, for every $x\in X$. Then, $f$ is bijective, and both $(X,r)$ and $(Y,s)$ are twist solutions.
\end{exemple}

We can see homomorphisms of skew braces as a particular case of i-homomorphisms between associated solutions.

\begin{exemple}
\label{ex:homomorfisme-brides}
Let $f \colon B_1 \rightarrow B_2$ be a homomorphism of skew braces $B_1$ and~$B_2$, and call $J:= \Ker f$, which is an ideal of~$B_1$. Then, $f$ can be seen also as an i-homomorphism of solutions $(B_1,r_{B_1})$ and $(B_2, r_{B_2})$ with i-kernel $J\subseteq B_1$. Indeed, $J$ induces a partition of $B_1$ such that each subset of the partition is of the form $bJ$ for some $b\in B_1$, and it is an equivalence class of elements with the same image under~$f$. By definition of ideal, $J$ is $\lambda^{B_1}$-invariant and $\rho^{B_1}$-invariant in~$B_1$. Thus, for every $b\in B_1$, $r_{B_1}(bJ, J) = J \times bJ$ and $r_{B_1}(J, bJ) = bJ \times J$.
\end{exemple}

However, it is possible to construct i-homomorphisms between solutions associated with skew braces that are not homomorphisms of skew braces. A left ideal $S$ of a skew brace $B$ is said to be a \emph{strong left ideal} if it is normal in the additive group of~$B$.

\begin{proposicio}
\label{prop:strong-lr-invariant}
Let $B$ be a skew brace and let $S$ be a strong left ideal of~$B$. Then, $S$ is $\lambda$ and $\rho$-invariant.
\end{proposicio}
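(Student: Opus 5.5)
We must show that $\lambda_a(S)\subseteq S$ and $\rho_a(S)\subseteq S$ for every $a\in B$. Here $\lambda_a(b)=-a+ab$ is the usual lambda map of the skew brace, and $\rho$ is determined by $ab=\lambda_a(b)\rho_b(a)$. That is, $\rho_b(a)=\lambda_a(b)^{-1}ab$.

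The $\lambda$-invariance is exactly the definition of a left ideal: $S$ is a subgroup of $(B,+)$ with $\lambda_a(S)\subseteq S$ for all $a\in B$. So the real content is the $\rho$-invariance. Here the claim should be read as $\rho_b(s)\in S$ for every $s\in S$ and $b\in B$, i.e. $S$ is stable under the right action $\rho$ acting on elements of~$S$.

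First I recall the standard facts about left ideals.
\begin{enumerate}
\item A left ideal is a subgroup of $(B,\cdot)$. Indeed, for $s,t\in S$ we have $st=s+\lambda_s(t)\in S$, and $s^{-1}=\lambda_{s^{-1}}(-s)=\lambda_s^{-1}(-s)\in S$, since $\lambda_s$ restricts to a bijection of~$S$.
\item Using the formula above, $\rho_b(s)=\lambda_s(b)^{-1}sb$, so it suffices to show that $\lambda_s(b)^{-1}sb\in S$.
\item Write $\lambda_s(b)=-s+sb$. Then $sb=s+\lambda_s(b)$, and hence
\[
\rho_b(s)=\lambda_s(b)^{-1}\bigl(s+\lambda_s(b)\bigr).
\]
\end{enumerate}

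The key step is to rewrite $\lambda_s(b)^{-1}\bigl(s+\lambda_s(b)\bigr)$ using left distributivity, $x(y+z)=xy-x+xz$. Setting $c=\lambda_s(b)$, this gives
\[
c^{-1}(s+c)=c^{-1}s-c^{-1}+c^{-1}c=c^{-1}s-c^{-1}.
\]
Next, $c^{-1}s=c^{-1}+\lambda_{c^{-1}}(s)$. Substituting,
\[
\rho_b(s)=c^{-1}+\lambda_{c^{-1}}(s)-c^{-1}.
\]
This is an additive conjugate of $\lambda_{c^{-1}}(s)$. Now $\lambda_{c^{-1}}(s)\in S$ because $S$ is a left ideal, and $S$ is normal in $(B,+)$ because it is strong. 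Hence $\rho_b(s)\in S$.

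The main obstacle is only making sure the distributivity manipulation is correct, in particular that $c^{-1}c=0$ is the identity of both operations. Once the expression $\rho_b(s)=c^{-1}+\lambda_{c^{-1}}(s)-c^{-1}$ is in hand, the conclusion is immediate. Since $\rho_b$ is a bijection of $B$ and the same argument applies to $\rho_b^{-1}=\rho_{b^{-1}}$, equality $\rho_b(S)=S$ also follows.
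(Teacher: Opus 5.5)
Your proof is correct and is essentially the paper's argument. The paper writes $\rho_b(s)^{-1}=b^{-1}(s^{-1}+b)$ and uses left distributivity to get $b^{-1}+\lambda_{b^{-1}}(s^{-1})-b^{-1}\in S$, whereas you run the same computation directly on $\rho_b(s)=c^{-1}(s+c)$ with $c=\lambda_s(b)$; this avoids the final passage through multiplicative inverses, so your preliminary item showing that $S$ is a subgroup of $(B,\cdot)$ is not actually needed.
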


\begin{proof}
Let $b\in B$ and $s \in S$. Since $S$ is a left ideal, it remains to check that $\rho_b(s) = (s^{-1}+b)^{-1}b \in S$. Observe that
\[
 \rho_b(s)^{-1} = b^{-1}(s^{-1}+b) = b^{-1}s^{-1} - b^{-1} = b^{-1} + \lambda_{b^{-1}}(s^{-1}) - b^{-1}\]
which belongs to $S$ because $S$ is $\lambda$-invariant and a normal subgroup of the additive group in~$B$. Then, the proposition holds as $S$ is closed under inverses of both products and sums.
\end{proof}

\begin{exemple}
\label{ex:i-morfism-no-morfism}
Let $B$ be a skew brace and let $S$ be a strong left ideal of~$B$. By Proposition~\ref{prop:strong-lr-invariant}, we know that $S$ is $\lambda$ and $\rho$-invariant. Then, $\mathcal{K} = \{S, B\setminus S\}$ defines a partition of $B$ such that $\lambda_X(Y) = Y$ and $\rho_Y(X) = X$ for every $X,Y \in \mathcal{K}$. Take $B_2 = \{0,b\}$ a trivial skew brace of order $2$, and define $f\colon B \rightarrow B_2$ as $f(x) = 0$ if $x \in S$, and $f(x) = b$ otherwise. It turns out that $f$ is an i-homomorphism of skew braces with i-kernel~$S$.
\end{exemple}

Ideals of skew braces are subsets of a solution with stronger properties than i-kernels; but the following example shows that, when embedded in associated structure skew braces, they are no longer ideals in general.

\begin{exemple}
\label{ex:InoidG(X,r)}
Consider $B$ as the \texttt{SmallSkewbrace}(8,47) in the \texttt{YangBaxter} library of GAP~\cite{GAP4-15}. We can see that $B$ is of abelian type, it satisfies that $\Ker \lambda = \{0\}$, and it has a unique proper ideal $0 \lneq I \lneq B$, which is an abelian ideal of order~$4$. Thus, $B$ is a soluble skew brace. Recall that we denote $\iota \colon b \in B \mapsto \bar b\in G_B$. In this example, we consider only $\lambda$ and $\rho$ maps of~$B$.

Let us call $\bar I = \iota(I) = \{0,\bar a_1,\bar a_2,\bar a_3\}$ and $\overline{B \setminus I} = \iota(B \setminus I) = \{\bar b,\bar c,\bar d,\bar e\}$. We know that 
\[ (G_B,\cdot) = \langle B \mid \bar x \bar y = \overline{\lambda_x(y)}\,\overline{\rho_y(x)}, \ \forall\,x,y\in B\rangle.\]
Let us compute all relations in $(G_B,\cdot)$. Since $I$ is an abelian skew brace, it means that $\left.\lambda_{a_i}\right|_I = \operatorname{id}_I = \left.\rho_{a_i}\right|_I$, $1\leq i \leq 3$. Thus, $\langle \bar I \rangle_{\bigcdot}$ is an abelian subgroup of $(G_B,\cdot)$. We also know that $\bar 0 \in G_B$ is an element in the centre of $(G_B,\cdot)$. Moreover, it holds that
\[ \lambda_x(a_1) = \rho_x(a_1) = a_3, \quad \lambda_x(a_2) = \rho_x(a_2) = a_2,\quad  \lambda_x(a_3) = \rho_x(a_3) = a_1\]
for every $x \in B\setminus I$. Since $B$ is of abelian type, $r_B$ is involutive, and therefore, for every $x \in B \setminus I$, $\bar x\bar a_1 = \bar a_3 \bar y$ for some $y \in B\setminus I$ implies that $\lambda_{a_3}(y) = x$. Thus, $\bar x\bar a_1 = \bar a_3\bar y$ appears twice in the set of relations. We argue analogously for $\bar a_1\bar x = \bar y\bar a_3$. Therefore, the set of relations with elements of $I$ are the following ones
\begin{equation}
\label{eq:rel1}
\begin{array}{lll}
\bar c\bar a_1 = \bar a_3\bar e & \bar b\bar a_2 = \bar a_2\bar e & \bar b\bar a_3 = \bar a_1\bar c\\
\bar b\bar a_1 = \bar a_3\bar d & \bar c\bar a_2 = \bar a_2\bar d & \bar c\bar a_3 = \bar a_1\bar b\\
\bar d\bar a_1 = \bar a_3\bar b & \bar d\bar a_2 = \bar a_2\bar c & \bar d\bar a_3 = \bar a_1\bar e\\
\bar e\bar a_1 = \bar a_3\bar c & \bar e\bar a_2 = \bar a_2\bar b & \bar e\bar a_3 = \bar a_1\bar d
\end{array}
\end{equation}
On the other hand, it also holds $\lambda_b(b) = \rho_b(b) = b$; $\lambda_c(d) = c$, $\rho_d(c) = d$; $\lambda_e(e) = \rho_e(e) = e$ which provide empty relations. In addition, for every $x,y\in B\setminus I$ if $\lambda_x(y) = u$ and $\rho_y(x) = v$ for certain $u, v\in B\setminus I$, by the involutive property of~$r$, $\lambda_u(v) = x$ and $\lambda_v(u) = y$. Therefore, we complete the set of relations with the elements in $B\setminus I$:
\begin{equation}
\label{eq:rel2}
\begin{array}{lll}
\bar b\bar c = \bar c\bar e & \bar b\bar d = \bar e\bar c & \bar b\bar e = \bar d\bar d\\
\bar c\bar b = \bar d\bar e & \bar c\bar c = \bar e\bar b & \bar d\bar b = \bar e\bar d
\end{array}
\end{equation}
Call $R$ the set of relators in $(G_B,\cdot)$. We conclude that $w \in R$ if, and only if, either $w$ is a commutator of the form $[\bar x,\bar y] = \bar x\bar y\bar x^{-1}\bar y^{-1}$ where $x,y \in I$, $w = [\bar 0,\bar x] = \bar 0\bar x\bar 0^{-1}\bar x^{-1}$ where $x \in B$, or $w = \bar x\bar y\bar z^{-1}\bar t^{-1}$ comes from one of the previous relations in~\eqref{eq:rel1} or~\eqref{eq:rel2}. Take $N$ the normal subgroup generated by~$R$.

We show that $\langle \bar I \rangle_{\bigcdot}$ is not an ideal of $G(X,r)$, since it is not a normal subgroup of $(G_B,\cdot)$. Indeed, take $b \in B\setminus I$, and by a way of contradiction assume that $\bar b\bar a\bar b^{-1}\in \langle I \rangle_{\bigcdot}$. Then, there exist $a_1, \ldots, a_n \in I$, and $\varepsilon_i\in \{\pm 1\}$ with $1\leq i \leq n$, such that the word
\[ w:= \bar b \bar a\bar b^{-1}\bar a_1^{\varepsilon_1}\cdots \bar a_n^{\varepsilon_n} \]
belongs to $N$, i.e. $w$ is a finite product of conjugates of elements in either $R$ or~$R^{-1}$. Observe that the letter $\bar b^{-1}$ in the word $w$ can not be a conjugator element of neither an element of $R$ nor $R^{-1}$ because, after it on the word~$w$, there are only products of elements in $I\cup I^{-1}$. Hence, $\bar b\bar a\bar b^{-1}$ is a subword of either a relator $r\in R$ or $r^{-1}\in R^{-1}$. According to relations in~\eqref{eq:rel1} and~\eqref{eq:rel2}, that is not possible, and we arrive to a contradiction.
\end{exemple}

Nevertheless, we can see that every i-kernel of an i-homomorphism induces both a strong left ideal and an ideal of the structure skew brace of a solution.

\begin{nota}
\label{nota:imatge-i-kernel}
Let $f\colon X \rightarrow Y$ be an i-epimorphism between solutions $(X,r)$ and $(Y,s)$, and let $X_0\subseteq X$ be an i-kernel of $f$ such that $y_0:= f(x)$ for every $x\in X_0$. Since $f$ is a homomorphism, $f$ makes compatible the $\lambda$ and $\rho$ components of $r$ and~$s$. Moreover, $r(Z,X_0) = X_0\times Z$ and $r(X_0,Z) = Z \times X_0$ for every equivalence class of $\ker f$. Since $f$ is surjective, it follows that $\lambda_{y_0}^s = \rho_{y_0}^s = \mathrm{id}_Y$, and $y_0$ is a fixed element by $\lambda^s$'s and $\rho^s$'s, i.e. $\lambda^s_y(y_0) = \rho^s_y(y_0) = y_0$ for every $y\in Y$. Hence, according to relations~\eqref{eq:G(X,r)prod} and~\eqref{eq:G(X,r)sum} of the multiplicative and additive group of the structure skew brace of a solution, it follows that $y_0 = \iota(y_0)\in G_U$ is a central element.
\end{nota}

\begin{teorema}
\label{teo:X0-kerf-ideal}
Let $f\colon X \rightarrow U$ be an i-homomorphism of solutions $(X,r)$ and $(U,s)$ with i-kernel~$X_0$, and induced homomorphism $\bar{f}\colon G_X \rightarrow G_U$. Then, it follows that
\begin{enumerate}
\item $\langle X_0 \rangle_{\bigcdot} = \langle X_0 \rangle_+$ is a strong left ideal of $G_X$;
\item $I^f_{X_0}:= \langle X_0\rangle_{\bigcdot}\Ker \bar{f} = \langle X_0\rangle_+ + \Ker \bar{f}$ is an ideal of~$G_X$;
\item \label{item:x(kerf)y} for every $x,y \in X$ with $x (\ker f) y$, $xI^f_{X_0} = yI^f_{X_0}$;
\item if $\iota\colon U \rightarrow G_U$ is injective, then converse of item~\ref{item:x(kerf)y} is also true.
\end{enumerate}
\end{teorema}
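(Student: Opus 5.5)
The plan is to reduce everything to Remark~\ref{nota:imatge-i-kernel}, working in $G_U$. Let $u_0=f(X_0)$. Then $\bar f(\langle X_0\rangle_{\bigcdot})=\langle u_0\rangle_{\bigcdot}$. Since $\lambda^s_{u_0}=\rho^s_{u_0}=\mathrm{id}_U$ and $u_0$ is fixed by every $\lambda^s_y,\rho^s_y$, the element $u_0$ is central in both groups of $G_U$. Moreover, every $\lambda^{G_U}_w$ and $\rho^{G_U}_w$ fixes $u_0$, which follows from \eqref{eq:lambda-en-GX}, \eqref{eq:rho-en-GX} and the product formulas \eqref{eq:lambda-prod-GX}, \eqref{eq:rho-prod-GX}. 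So $J:=\langle u_0\rangle_{\bigcdot}=\langle u_0\rangle_+$ is an ideal of $G_U$. Its preimage $\bar f^{-1}(J)$ is then an ideal of $G_X$. This preimage is exactly $\langle X_0\rangle_{\bigcdot}\Ker\bar f$, and also $\langle X_0\rangle_+ +\Ker\bar f$ once item~1 is known. This gives item~2 directly, without checking the ideal axioms on a product set by hand.

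For item~1, I would first show that $\lambda^r_z(X_0)=X_0$ and $\rho^r_z(X_0)=X_0$ for every $z\in X$, and likewise for $z^{-1}$. Inclusion comes from $r(Z,X_0)=X_0\times Z$, with $Z$ the class of $z$. Equality should follow from the set equality in the definition of i-kernel together with non-degeneracy, because $\lambda_z$ permutes $X$ and also preserves each class $Z'$ when applied to elements of $X_0$'s complement, as $r(X_0,Z')=Z'\times X_0$ controls $\rho$ and $\lambda$. I expect this bijectivity bookkeeping to be the most delicate point. It is where I would look carefully at the role of $r(X_0,Z)=Z\times X_0$: it gives that $\lambda_{x_0}$ and $\rho_{x_0}$ preserve every class, which is needed below.

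Once $\lambda^{G_X}_w(X_0^{\pm1})\subseteq\langle X_0\rangle$ holds for all $w$, the relation $a+b=a\lambda_{a^{-1}}(b)$ gives $\langle X_0\rangle_{\bigcdot}=\langle X_0\rangle_+$, and this subgroup is $\lambda$-invariant, i.e.\ a left ideal. For additive normality I would use the additive relations \eqref{eq:G(X,r)sum} with $x=x_0\in X_0$:
\[ -\lambda_{x_0}(y)+x_0+\lambda_{x_0}(y)=\lambda_{\lambda_{x_0}(y)}(\rho_y(x_0))\in X_0 .\]
As $y$ ranges over $X$, $\lambda_{x_0}(y)$ ranges over all of $X$. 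Hence conjugation by every generator, and analogously by its inverse, maps $X_0$ into $\langle X_0\rangle_+$. So $\langle X_0\rangle_+$ is a strong left ideal.

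Item~3 is immediate from \eqref{eq:x(ker f)y->Kerf}: $x^{-1}y\in\Ker\bar f\subseteq I^f_{X_0}$. For item~4, suppose $xI^f_{X_0}=yI^f_{X_0}$. Then $\bar f(x^{-1}y)\in J$, so $f(y)=f(x)u_0^k$ in $G_U$ for some $k\in\mathbb Z$. All defining relations \eqref{eq:G(X,r)prod} are length-homogeneous, so there is a degree homomorphism $(G_U,\cdot)\to\mathbb Z$ sending each generator to $1$. Comparing degrees forces $k=0$. Hence $\iota(f(x))=\iota(f(y))$, and injectivity of $\iota$ gives $f(x)=f(y)$, that is, $x(\ker f)y$.
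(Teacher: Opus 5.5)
Your items 2 and 4 take a different route from the paper, and in places a better one. The paper proves item 2 by computing $x^{-1}zx=w\rho_x(z)$ and $xzx^{-1}=\lambda_x(z)w$ with $w\in\Ker\bar f$, so it depends on item 1. You instead realise $I^f_{X_0}$ as $\bar f^{-1}(\langle u_0\rangle_{\bigcdot})$: if $\bar f(w)=u_0^k$, then $wz^{-k}\in\Ker\bar f$ for any $z\in X_0$. This makes the ideal property independent of item 1. In item 4, your degree homomorphism $(G_U,\cdot)\to\mathbb{Z}$ is a rigorous substitute for the paper's informal claim that ``all relators containing $u_0$ are commutators''. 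It needs neither surjectivity of $f$ nor centrality of $u_0$.

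Item 2 as written, however, rests on the Remark preceding the theorem, which is about i-epimorphisms, whereas here $f$ is only an i-homomorphism. For $y\in U\setminus f(X)$ nothing forces $\lambda^s_y(u_0)=\rho^s_y(u_0)=u_0$. For example, send the one-point solution to a non-central element $b$ of the trivial skew brace on a nonabelian group, where $\rho_c(b)=c^{-1}bc$. So $\langle u_0\rangle_{\bigcdot}$ need not be an ideal of $G_U$. Repair this by working in the sub-skew brace $\bar f(G_X)$, which is generated additively and multiplicatively by $f(X)$. For $y\in f(X)$ the identities $\lambda^s_{u_0}(y)=\rho^s_{u_0}(y)=y$ and $\lambda^s_y(u_0)=\rho^s_y(u_0)=u_0$ do follow from the i-kernel conditions. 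Hence $u_0$ is central in both groups of $\bar f(G_X)$ and fixed by its $\lambda$-maps, $\langle u_0\rangle_{\bigcdot}$ is an ideal of $\bar f(G_X)$, and its preimage under $\bar f$ is an ideal of $G_X$.

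Two steps of item 1 need genuine repair.

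First, your reason for $\lambda_z(X_0)=X_0$ is wrong. The condition $r(X_0,Z')=Z'\times X_0$ says only that $\lambda_{x_0}$ maps $Z'$ into $Z'$ for $x_0\in X_0$, and that $\rho_{z'}$ maps $X_0$ into $X_0$. It says nothing about $\lambda_z$ on $Z'$ when $z\notin X_0$, and such $\lambda_z$ generally moves classes. What works is non-degeneracy of $(U,s)$. Since $f(\lambda_z(w))=\lambda^s_{f(z)}(f(w))$ and $\lambda^s_{f(z)}$ is injective, $\lambda_z$ sends distinct $\ker f$-classes into distinct classes. As $\lambda_z(X_0)\subseteq X_0$, no $w\notin X_0$ is sent into $X_0$, and bijectivity of $\lambda_z$ gives $\lambda_z(X_0)=X_0$. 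The same argument works for $\rho_z$. (The paper merely asserts this equality.)

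Second, additive normality under conjugation by inverses is not ``analogous''. Your computation uses only the inclusions $\lambda_x(X_0)\subseteq X_0$ and $\rho_y(X_0)\subseteq X_0$. In an infinite group, $-x+H+x\subseteq H$ for all generators $x$ does not make $H$ normal. To get $x+x_0-x\in X_0$ you need a surjectivity input:
\begin{enumerate}
\item Put $v=\lambda_x^{-1}(x_0)$, which lies in $X_0$ by the first point.
\item The set equality $r(X_0,Z)=Z\times X_0$, with $Z$ the class of $x$, gives $(a,b)\in X_0\times Z$ with $r(a,b)=(x,v)$.
\item The relation $a+\lambda_a(b)=\lambda_a(b)+\lambda_{\lambda_a(b)}(\rho_b(a))$ then reads $a+x=x+x_0$, so $x+x_0-x=a\in X_0$.
\end{enumerate}
The paper produces the same $a$ from the bijectivity of $r$.
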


\begin{proof}
1. Since $(X_0, \left. r\right|_{X_0})$ is a subsolution, then $\langle X_0\rangle_{\bigcdot} = \langle X_0 \rangle_+$ is a subbrace of $G_X$. By the definition of i-homomorphism, for every $x\in X$ it holds that $\lambda^r_x(X_0) = (\lambda^r)_x^{-1}(X_0) = X_0$. Thus, applying equation~\eqref{eq:lambda-en-GX}, $\lambda^{G_X}_w(X_0) = X_0$ for every $w \in G_X$, and therefore, $\langle X_0\rangle_+$ is $\lambda^{G_X}$-invariant. Moreover, $\langle X_0 \rangle_+$ is a normal subgroup of 
\[ (G_X,+) = \langle X \mid x + \lambda_x(y) = \lambda_x(y) + \lambda_{\lambda_x(y)}(\rho_y(x)),\, \forall x,y\in X \rangle_+.\]
Indeed, let $x \in X$ and $z\in X_0$. Since $\lambda_z$ is bijective, there exists $y\in X$ such that $\lambda_z(y) = x$. Then, it holds that
\[ z + x = z + \lambda_z(y) = \lambda_z(y) + \lambda_{\lambda_z(y)}(\rho_y(z)).\]
By the definition of i-homomorphism, $\rho_y(z)\in X_0$ so that $\lambda_{\lambda_z(y)}(\rho_y(z))\in X_0$. Hence, $-x + z + x \in X_0$. On the other hand, take $v \in X_0$ such that $\lambda_x(v) = z$. Since $r$ is bijective (see~\cite[Corollary 3.4]{JedlickaPilitowska26}), there exist $w, y \in X$  such that $\lambda_w(y) = x$ and $\rho_y(w) = v$. By the definition of i-homomorphism, $w \in X_0$, and therefore
\[ x + z = \lambda_w(y) + \lambda_{\lambda_w(y)}(\rho_y(w)) = w + \lambda_w(y) = w + x.\]
Thus, $x + z - x = w \in X_0$. Hence, $\langle X_0\rangle_+$ is a strong left ideal.

2. Since $\Ker \bar{f}$ is an ideal of~$G_X$, it follows that
\[ I^f_{X_0}:= \langle X_0\rangle_{\bigcdot}\Ker \bar{f} = \langle X_0\rangle_+ + \Ker \bar{f}\]
Moreover, since $\langle X_0\rangle_+$ is a strong left ideal, it suffices to prove that $I^f_{X_0}$ is a normal subgroup of the multiplicative group 
\[ G_X = \langle X \mid xy = \lambda_x(y)\rho_y(x), \, \forall x, y\in X \rangle.\]
Let $x \in X$ and let $z \in X_0$. By the definition of i-homomorphism, it holds that $\lambda_z(x) (\ker f) x$. Thus, by~\eqref{eq:x(ker f)y->Kerf}, there exists $w \in \Ker \bar{f}$ such that $\lambda_z(x) = xw \in G_X$. On the other hand, $\rho_x(z) \in X_0$. Then, it holds that 
\[ zx = \lambda_z(x)\rho_x(z) = xw\rho_x(z),\]
and therefore, $x^{-1}zx = w \rho_x(z) \in I^{f}_{X_0}$. Similarly, $\rho_z(x) (\ker f) x$ so that $\rho_z(x) = wx$, for some $w \in \Ker\bar{f}$, and $\lambda_x(z) \in X_0$. Thus, 
$xzx^{-1} = \lambda_x(z)w \in I^f_{X_0}$. Hence, $I^f_{X_0}$ is a normal subgroup of $(G_X,\cdot)$, and $I^f_{X_0}$ is an ideal.

3. If $x, y\in X$ satisfy that $f(x) = f(y)$, then $\bar{f}(x) = \bar{f}(y)$ and $xI^f_{X_0} = yI^f_{X_0}$. 

4. On the other hand, suppose that $xI^f_{X_0} = yI^f_{X_0}$, for some $x,y\in X$. Without loss of generality, we can assume that $f$ is surjective. It suffices to show that $f(x) = f(y) \in G_U$, as $\iota \colon U \rightarrow G_U$ is injective.

Call $u_0:= f(X_0) \in U$. By definition of $I^f_{X_0}$, it holds that
\[ x^{-1}y = z_1^{\varepsilon_1}\cdots z_n^{\varepsilon_n}k,\]
for some $k\in \Ker \bar{f}$, $z_1, \ldots, z_n \in X_0$ and $\varepsilon_i = \pm 1$, $1\leq i \leq n$. Since $u_0 = f(z_i)$ for every $1\leq i \leq n$, it follows that 
\[ f(x)^{-1}f(y) = \bar{f}(x)^{-1}\bar{f}(y) = \bar{f}(x^{-1}y) = u_0^{\varepsilon_1+\cdots + \varepsilon_n} \in G_Y.\]
Call $u_1 := f(x) \in U$ and $u_2 := f(y)\in U$. By the previous remark, $u_0$ is a central element in $G_U$. Thus, it follows that all relators of $G_U$ with the letter $u_0$ are of the form $u_0uu_0^{-1}u^{-1}$ for every $u\in U$. Therefore, 
\[ u_1^{-1}u_2 = u_0^{\varepsilon_1+ \cdots + \varepsilon_n} \in G_U \Leftrightarrow u_1^{-1}u_2 = 0\in G_U\]
i.e. $f(x) = f(y) \in G_U$.
\end{proof}

\begin{corolari}
\label{cor:I_J-ideal-brace}
Let $B$ be a skew brace with an ideal $J$. Let $f\colon B \rightarrow B/J$ be a canonical epimorphism with induced epimorphism $\bar{f}\colon G_B \rightarrow G_{B/J}$. The following statements hold:
\begin{enumerate}
\item $I_J^f:= \langle \bar{J} \rangle_{\bigcdot}\Ker \bar f$ is an ideal of $G_B$ such that $\bar b_1 I_J^f = \bar b_2 I_J^f$ if and only if $b_1J = b_2J$, for every $b_1,b_2\in B$;

\item $\pi_B(I_J) = J$, where $I_J = I_J^f\Ker \pi_B$, with $\pi_B$ the projection epimorphism of $G_B$ onto~$B$;
\end{enumerate}
\end{corolari}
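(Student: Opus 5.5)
The plan is to deduce both items from Theorem~\ref{teo:X0-kerf-ideal}, applied to the canonical epimorphism $f\colon B\to B/J$ viewed as an i-homomorphism of the associated solutions with i-kernel $J$. Example~\ref{ex:homomorfisme-brides} already provides this: the classes of $\ker f$ are the cosets $bJ$, and $r_B(bJ,J)=J\times bJ$, $r_B(J,bJ)=bJ\times J$.

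For item 1, items 1 and 2 of Theorem~\ref{teo:X0-kerf-ideal} applied with $X=B$, $X_0=J$ give at once that $I^f_J=\langle\bar J\rangle_{\bigcdot}\Ker\bar f$ is an ideal of $G_B$. For the equivalence we need item~\ref{item:x(kerf)y} and item 4 of that theorem. Item~\ref{item:x(kerf)y} gives that $b_1J=b_2J$ implies $\bar b_1I^f_J=\bar b_2I^f_J$. The converse needs $\iota\colon B/J\to G_{B/J}$ to be injective. This holds by the Remark~\ref{nota:ker-iota-brace}: for any skew brace $C$, the canonical map into $G_C$ is injective, because $\pi_C\iota=\mathrm{id}_C$. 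Applying this with $C=B/J$ and using item 4 of the theorem finishes item 1.

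For item 2, first note that $I_J=I^f_J\Ker\pi_B$ is a product of two ideals of $G_B$, by item 1 and Lemma~\ref{lema:kerg-socle}, so it is an ideal. Since $\pi_B$ kills $\Ker\pi_B$, we get $\pi_B(I_J)=\pi_B(I^f_J)=\pi_B(\langle\bar J\rangle_{\bigcdot})\,\pi_B(\Ker\bar f)$. The first factor is $\langle J\rangle_{\bigcdot}=J$, since $\pi_B(\bar j)=j$ and $J$ is a subgroup.

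The second factor, showing $\pi_B(\Ker\bar f)\subseteq J$, is the main step. Lemma~\ref{lema:kerbarf} says $\Ker\bar f$ is the ideal generated by $K=\{\bar b_1^{-1}\bar b_2\mid b_1J=b_2J\}$. Since $\pi_B$ is an epimorphism of skew braces, it sends the ideal generated by $K$ into the ideal of $B$ generated by $\pi_B(K)=\{b_1^{-1}b_2\mid b_1J=b_2J\}\subseteq J$, and that ideal is contained in $J$. An alternative route is the commutative square $\pi_{B/J}\bar f=f\pi_B$. This square holds by the uniqueness in the universal property, because both sides agree on $\iota(B)$. It gives $f(\pi_B(\Ker\bar f))=\pi_{B/J}(0)=0$, hence $\pi_B(\Ker\bar f)\subseteq\Ker f=J$. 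I would use this second argument, since it avoids describing generated ideals.

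Combining the two factors gives $J\subseteq\pi_B(I_J)\subseteq J\cdot J=J$, so $\pi_B(I_J)=J$.
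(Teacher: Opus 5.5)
Your proposal is correct and follows essentially the paper's proof. Item 1 is obtained exactly as there, by viewing $f$ as an i-homomorphism with i-kernel $J$ (Example~\ref{ex:homomorfisme-brides}) and invoking Theorem~\ref{teo:X0-kerf-ideal} together with the injectivity of $\iota\colon B/J\to G_{B/J}$; item 2 rests on the same commutative square $\pi_{B/J}\bar f=f\pi_B$, and the only cosmetic difference is that you split $\pi_B(I_J)=\pi_B(\langle\bar J\rangle_{\bigcdot})\,\pi_B(\Ker\bar f)$ and treat the factors separately, whereas the paper applies $f\pi_B$ to a general element $ykl$ of $I_J$.
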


\begin{proof}
Every homomorphism of skew braces is an i-homomorphism of associated solutions, and every associated solution with a skew brace is injective. Thus, the first statement is a direct consequence of Theorem~\ref{teo:X0-kerf-ideal}.


Let  $I_J = I_J^f\Ker \pi_B$, which is an ideal of~$G_B$ as it is a product of ideals. Take $\pi_{B/J} \colon G_{B/J} \rightarrow B/J$ the projection epimorphism of $G_{B/J}$ onto~$B/J$. By the definition of $\bar{f}$, $\pi_B$ and $\pi_{B/J}$ the following diagram commutes in both directions
\[ \xymatrix{B \ar@/^/[d]^{\iota} \ar[r]^{f}  & B/J   \ar@/^/[d]^{\bar{\iota}} \\ 
G_B \ar@/^/[u]^{\pi_B}  \ar[r]_{\bar f} & G_{B/J} \ar@/^/[u]^{\pi_{B/J}}}\]
Indeed, let $w = \bar b_1^{\varepsilon_1}\cdots \bar b_n^{\varepsilon_n} \in G_B$, with $b_1, \ldots , b_n \in B$, and $\varepsilon_i = \pm 1$, $1\leq i \leq n$. It follows that
\begin{align*}
\pi_{B/J}(\bar{f}(w)) & = \pi_{B/J}\left(\bar{f}(\bar b_1^{\varepsilon_1}\cdots \bar b_n^{\varepsilon_n})  \right) = \pi_{B/J}\left(\overline{f(b_1)}\,^{\varepsilon_1}\cdots \overline{f(b_n)}\,^{\varepsilon_n}\right) \\
& = f(b_1)^{\varepsilon_1}\cdots f(b_n)^{\varepsilon_n} \\
f(\pi_B(w)) & = f\left(\pi_B(\bar b_1^{\varepsilon_1}\cdots \bar b_n^{\varepsilon_n})\right) = f(b_1^{\varepsilon_1}\cdots b_n^{\varepsilon_n}) \\
& = f(b_1)^{\varepsilon_1}\cdots f(b_n)^{\varepsilon_n}
\end{align*}

The inclusion $J \subseteq \pi_B(I_J)$ follows from the fact that $J = \pi_B(\iota(J))$ with $\iota(J) = \bar{J} \subseteq I_J$. Now, let $\pi_B(w) \in B$ for some $w\in I_J$, and write $w = ykl$, for some $y \in \langle \bar{J} \rangle_{\bigcdot}$, $k\in \Ker\bar{f}$, and $l\in \Ker \pi_B$. By the commutativity of the previous diagram, $0 = f(\pi_B(l)) = \pi_{B/J}\bar{f}(l)$, and therefore, it follows that
\[ f(\pi_B(w)) = \pi_{B/J}(\bar{f}(ykl)) = \pi_{B/J}(\bar{f}(y)) \]
Write $y = \bar j_1^{\varepsilon_1}\cdots \bar j_r^{\varepsilon_r} \in G_B$ for some $j_i\in J$ and $\varepsilon_i = \pm 1$, $1\leq i \leq r$. By the definition of~$\bar f$, 
\[ \bar{f}(y) = \overline{f(j_1)}\,^{\varepsilon_1}\cdots \overline{f(j_r)}\,^{\varepsilon_r} = \bar{0}^{\varepsilon_1}\cdots \bar 0^{\varepsilon_r} \in G_{B/J}\]
Since $\pi_{B/J}(\bar 0) = 0\in B/J$, it follows that $\pi_{B/J}(\bar{f}(y)) = 0\in B/J$, and therefore, $f(\pi_B(w)) = 0\in B/J$, i.e $\pi_B(w) \in J$.
\end{proof}

%

\section{Soluble solutions of the YBE}
\label{sec:soluble}
Soluble skew braces were defined in~\cite{BallesterEstebanJimenezPerezC24-solubleskewbraces} as a class of skew braces with a rich ideal structure, in contrast with simple skew braces. In this section, we introduce a notion of solubility of solutions by means of the previous definition of i-homomorphisms of solutions. This will give rise to a class of solutions with a rich decomposability, in contrast with indecomposable solutions.


According to the definition of solubility of skew braces in~\cite{BallesterEstebanJimenezPerezC24-solubleskewbraces}, a soluble skew brace $B$ can be characterised by the existence of skew braces $\{B_k\}_{k=0}^t$ and epimorphisms $f_k \colon B \rightarrow B_k$ such that
\begin{align}
& \Ker f_t = 0 \subseteq \Ker f_{t-1} \subseteq \ldots \subseteq \Ker f_1 \subseteq \Ker f_0 = B; \label{eq:cadenaideals}\\
& \text{$f_k(\Ker f_{k-1})$ is an abelian ideal $J$ of $B_{k}$ for each $1\leq k \leq t$.}  \label{eq:abelianideal}
\end{align}

\begin{definicio}
\label{def:soluble-solution}
Let $(X,r)$ be a solution and $\{B_k\}_{k=0}^t$ be a sequence of skew braces. We say that $(X,r)$ is \emph{soluble}, if there exist i-epimorphisms of solutions $f_k \colon X \rightarrow B_k$, with respective i-kernels $X_k \subseteq X$ for each $0\leq k \leq t-1$, and an epimorphism of solutions $f_t\colon X \rightarrow B_t$, satisfying that for every $1\leq k \leq t$:
\begin{enumerate}
\item $\ker f_{k} \subseteq \ker f_{k-1}$, $\ker f_0 = \{X\}$ and $\ker f_t \subseteq \ker \iota$, where $\iota \colon X \rightarrow G_X$ is canonical homomorphism;
\item $f_{k}(X_{k-1})$ is contained in an abelian ideal $J$ of $B_k$, with $f_k(x)J = f_k(y)J$, for every $x,y\in X$ such that $x (\ker f_{k-1}) y$.
\end{enumerate}
\end{definicio}

Our first main result shows that soluble solutions provide soluble structure skew braces.  

\begin{teorema}
\label{teo:soluble-solution->GX-soluble}
Let $(X,r)$ be a soluble solution. Then, its associated structure skew brace is soluble.
\end{teorema}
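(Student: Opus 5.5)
The plan is to build a descending chain of ideals of $G_X$ from the kernels of the induced homomorphisms $\bar f_k\colon G_X\rightarrow G_{B_k}$, and show it satisfies conditions \eqref{eq:cadenaideals} and \eqref{eq:abelianideal} after a refinement. The i-kernels of the $f_k$ should only matter through condition~2 of Definition~\ref{def:soluble-solution}. Throughout I write $K_k:=\Ker \bar f_k$ and use the composite $h_k:=\pi_{B_k}\bar f_k\colon G_X\rightarrow B_k$. Since $f_k$ is surjective, $h_k$ is a skew brace epimorphism with $h_k(x)=f_k(x)$ for every $x\in X$.

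\textbf{Step 1: the chain is nested, with the right ends.} By Lemma~\ref{lema:kerbarf} (applied to $f_k$ viewed onto its image), $K_k$ is the ideal generated by the elements $x^{-1}y$ with $x(\ker f_k)y$.
\begin{itemize}
\item Since $\ker f_k\subseteq \ker f_{k-1}$, these generators lie in $K_{k-1}$. Hence $K_t\subseteq K_{t-1}\subseteq\cdots\subseteq K_0$.
\item Since $\ker f_0=\{X\}$, every $x^{-1}y$ lies in $K_0$. So $G_X/K_0$ is generated by the image of a single element, and is a quotient of the structure brace of a singleton solution. That brace is infinite cyclic and trivial, so abelian.
\item Since $\ker f_t\subseteq\ker\iota$, each generator $x^{-1}y$ of $K_t$ equals $0$ in $G_X$. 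Hence $K_t=0$.
\end{itemize}
If one insists on starting at $G_X$ itself, prepend $G_X\supseteq K_0$, whose factor is abelian.

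\textbf{Step 2: the factor $K_{k-1}/K_k$, refined by $L_k:=K_{k-1}\cap\Ker h_k$.} Note $K_k\subseteq L_k\subseteq K_{k-1}$, and all three are ideals. I split the factor into two pieces.
\begin{itemize}
\item \emph{Upper piece $K_{k-1}/L_k$.} Each generator $x^{-1}y$ of $K_{k-1}$, with $x(\ker f_{k-1})y$, satisfies $h_k(x^{-1}y)=f_k(x)^{-1}f_k(y)\in J$ by condition~2. Since $h_k$ is surjective, the image of an ideal is an ideal, so $h_k(K_{k-1})\subseteq J$. Therefore $K_{k-1}/L_k\cong h_k(K_{k-1})$ is an ideal of $B_k$ contained in the abelian ideal $J$, hence abelian.
\item \emph{Lower piece $L_k/K_k$.} This embeds via $\bar f_k$ into $\bar f_k(\Ker h_k)\subseteq\Ker\pi_{B_k}$. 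By Lemma~\ref{lema:kerg-socle}, $\Ker\pi_{B_k}\subseteq\Soc(G_{B_k})$, whose additive and multiplicative structures coincide and are abelian. So $L_k/K_k$ is an abelian skew brace.
\end{itemize}

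\textbf{Step 3: conclude solubility.} Combine the refined series
\[G_X\supseteq K_0\supseteq L_1\supseteq K_1\supseteq\cdots\supseteq L_t\supseteq K_t=0\]
with closure of solubility of skew braces under extensions, as in \cite{BallesterEstebanJimenezPerezC24-solubleskewbraces}. Alternatively, check directly that each factor is an abelian ideal of the corresponding quotient $G_X/(\text{next term})$.

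\textbf{Main obstacle.} I expect the delicate point to be the precise meaning of ``abelian ideal'' in \eqref{eq:abelianideal]}. It presumably requires the subquotient to have trivial commutators with itself inside the quotient brace, not merely to be an abelian skew brace in isolation. For the upper piece this comes from $J$ being abelian in $B_k$, transported along $h_k$. For the lower piece I would exploit that elements of $\Soc(G_{B_k})$ act trivially through $\lambda$ and $\rho$ and are additively central. This makes $[L_k,L_k]_{\cdot}$, $[L_k,L_k]_+$ and $L_k*L_k$ map to $0$ under $\bar f_k$, hence lie in $K_k$.
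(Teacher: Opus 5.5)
Your proof is correct, and it takes a genuinely different route from the paper's.

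The paper's route is as follows.
\begin{enumerate}
\item It builds its series from the i-kernel ideals $I_k=\langle X_k\rangle_{\bigcdot}\Ker\bar f_k$ of Theorem~\ref{teo:X0-kerf-ideal}, with $I_0=G_X$ because $X_0=X$.
\item Via Corollary~\ref{cor:I_J-ideal-brace}, it constructs in $G_{B_k}$ the soluble ideal $I_J=\langle\bar J\rangle_{\bigcdot}\Ker\bar g\,\Ker\pi_{B_k}$.
\item It proves $\bar f_k(I_{k-1})\subseteq I_J$ using both halves of condition~2: $f_k(X_{k-1})\subseteq J$ handles the generators coming from $X_{k-1}$, and the coset condition together with Lemma~\ref{lema:kerbarf} handles $\Ker\bar f_{k-1}$.
\item It then passes inductively through the quotients $G_X/\Ker\bar f_k$.
\end{enumerate}

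You use only the kernels $K_k=\Ker\bar f_k$ and the composite $h_k=\pi_{B_k}\bar f_k$. Your intersection $L_k=K_{k-1}\cap\Ker h_k$ splits $K_{k-1}/K_k$ into two pieces. The upper piece is isomorphic to an ideal of $B_k$ inside $J$. The lower piece embeds in $\Ker\pi_{B_k}\subseteq\Soc(G_{B_k})$, which plays the same role as Lemma~\ref{lema:kerg-socle} does in the paper.

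Your route buys economy and a sharper statement. It never uses the i-homomorphism property, Theorem~\ref{teo:X0-kerf-ideal}, Corollary~\ref{cor:I_J-ideal-brace}, or the inclusion $f_k(X_{k-1})\subseteq J$. So it shows this implication needs only condition~1 and the coset half of condition~2. It also yields an explicit ideal series of $G_X$ with abelian factors, of length at most $2t+1$, in place of the paper's looser ``soluble ideal with respect to'' bookkeeping.

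The paper's route buys a series whose terms contain $\iota(X_k)$. This keeps visible the correspondence between i-kernels and ideals of $G_X$, around which the corrigendum is organised.

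Finally, the obstacle you anticipate does not arise. Being an abelian ideal is intrinsic to the ideal, since its commutators are computed inside it. Your final paragraph and the paper's own appeal to Lemma~\ref{lema:kerg-socle} already reflect this.
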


\begin{proof}
Let $\{B_k\}_{k=0}^t$ be a sequence of skew braces such that there exist an i-epimorphism $f_k \colon X \rightarrow B_k$ with $i$-kernel $X_k \subseteq X$, for each $0\leq k \leq t-1$, and an epimorphism $f_t\colon X \rightarrow B_t$ with $\ker f_t \subseteq \ker \iota$, satisfying conditions 1 and~2 of the definition.

According to Theorem~\ref{teo:X0-kerf-ideal}, for every $0\leq k \leq t-1$, we can take 
\[ I_k:= I_{X_k}^{f_k} = \langle X_k \rangle_{\bigcdot}\Ker \bar{f}_k,\]
which is an ideal of $G_X$, where $\bar f_k \colon G_k \rightarrow G_{B_k}$ is an epimorphism induced by~$f_k$.

Fix $1\leq k \leq t$. Let $J$ be an abelian ideal of $B_k$ such that $f_k(X_{k-1})$ is contained in~$J$, and $J$ satisfies item $2$ of definition of soluble solution. Let $g\colon B_k \rightarrow B_k/J$ be a canonical epimorphism. According to Corollary~\ref{cor:I_J-ideal-brace},  $I_J:= \langle \bar J\rangle_{\bigcdot}\Ker \bar{g} \Ker \pi_{B_k}$ is an ideal of~$G_{B_k}$, where $\bar{g}\colon G_{B_k}\rightarrow G_{B_k/J}$ is an epimorphism induced by~$g$, such that $I_J/\Ker\pi_{B_k}$ is isomorphic to~$J$. Since $\Ker \pi_{B_k}$ is an abelian ideal of $G_{B_k}$ by Lemma~\ref{lema:kerg-socle}, it follows that $I_J$ is a soluble ideal with respect to~$G_{B_k}$.

Now, by the definition of $\bar{f}_k$, it holds that $\bar{f}_k(\langle X_{k-1} \rangle_{\bigcdot}) \subseteq \langle \bar J \rangle_{\bigcdot} \subseteq G_{B_k}$ as $f_k(X_{k-1}) \subseteq J$. Moreover, for every $x,y\in B$ with $f_{k-1}(x) = f_{k-1}(y)$, it holds that $f_{k-1}(x)J = f_{k-1}(y)J$, and therefore, $g(f_{k-1}(x)) = g(f_{k-1}(y))$. Thus, by Lemma~\ref{lema:kerbarf}, 
it follows that $\bar{f}_{k}(\Ker \bar f_{k-1})$ is contained in $\Ker \bar g$. Therefore, 
\[ \bar f_k(I_{k-1}) = \bar f_k(\langle X_{k-1}\rangle_{\bigcdot}\Ker \bar f_{k-1})\subseteq I_J.\]
Hence, $I_{k-1}/\Ker \bar f_k$ is a soluble ideal with respect to $G_X/\Ker \bar f_k$. In particular, for every $1 \leq k \leq t-1$, $I_{k-1}/I_k$ is a soluble ideal with respect to $G_X/I_k$, and $I_{t-1}/\Ker \bar f_t$ is a soluble ideal with respect to $G_X/\Ker \bar f_t$. Thus, $G_X/\Ker \bar f_t$ is a soluble skew brace.

Finally, since $\ker f_t \subseteq \ker \iota$, by Lemma~\ref{lema:kerbarf}, observe that $\Ker \bar f_t$ is the ideal generated by 
\[ \{x^{-1}y\in G_X\mid x(\ker f_t) y\} = \{x^{-1}y\in G_X\mid \iota(x) = \iota(y)\} = \{0\}\]
Hence, $\Ker \bar f_t = 0$ so that $G_X$ is a soluble skew brace.
\end{proof}

As a consequence, solubility of solutions extends the notion of solubility of skew braces. Before, we recall the definition of permutation skew brace of a solution $(X,r)$. Consider the group 
\[ \mathcal{G}(X,r) = \langle (\lambda_x, \rho_x^{-1}) \mid x \in X \rangle \leq \rm{Sym}_{X}\times \rm{Sym}_X\]
Following~\cite[Definition 1.5]{CedoJespersKubatVanAntwerpenVerwimp23}, consider the group homomorphism 
\[ h\colon (G_X,\cdot) \rightarrow \mathcal{G}(X,r), \quad \text{given by} \quad x \mapsto (\lambda_x, \rho_x^{-1}).\]
It turns out that $\Ker h$ is an ideal of $G_X$ contained in $\Soc(G_X)$, and therefore, we can define a sum in $\mathcal{G}(X,r)$ so that $(\mathcal{G}(X,r),+,\cdot)$ is a skew brace isomorphic to $G_X/\Ker h$, known as the \emph{permutation skew brace} of $(X,r)$ (see~\cite{CedoJespersKubatVanAntwerpenVerwimp23} for a detailed construction). Observe that $\mathcal{G}(X,r)$ is finite whenever $X$ is a finite set.
\begin{corolari}
\label{cor:soluble-brace-solution}
Let $B$ be a skew brace. The following are equivalent:
\begin{enumerate}
\item $B$ is a soluble skew brace;
\item $(B, r_B)$ is a soluble solution;
\item $G_B$ is a soluble skew brace;
\item $\mathcal{G}(B,r_B)$ is a soluble skew brace.
\end{enumerate}
\end{corolari}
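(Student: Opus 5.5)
We prove the cycle of implications $1\Rightarrow 2\Rightarrow 3\Rightarrow 4\Rightarrow 1$.

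\emph{Step $1\Rightarrow 2$.} Assume $B$ is soluble and take skew braces $B_k$ and epimorphisms $f_k\colon B\to B_k$ satisfying \eqref{eq:cadenaideals} and \eqref{eq:abelianideal}. The natural choice is $B_k:=B/\Ker f_k$, so $B_0=0$ and $B_t=B$. By Example~\ref{ex:homomorfisme-brides}, each $f_k$ is an i-epimorphism of the associated solutions, with i-kernel $X_k:=\Ker f_k$.

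We then check the conditions of Definition~\ref{def:soluble-solution}.
\begin{itemize}
\item Since the ideals are nested, so are the kernel relations, $\ker f_k\subseteq\ker f_{k-1}$.
\item $\ker f_0=\{B\}$, because $\Ker f_0=B$.
\item $\ker f_t$ is equality, since $\Ker f_t=0$. This is trivially contained in $\ker\iota$; moreover $\iota$ is injective by the remark after the definition of $\pi_B$.
\item For condition~2, set $J:=f_k(\Ker f_{k-1})$, which is an abelian ideal of $B_k$ by \eqref{eq:abelianideal}, and it contains $f_k(X_{k-1})$. If $x(\ker f_{k-1})y$, then $x^{-1}y\in\Ker f_{k-1}$. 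Hence $f_k(x)^{-1}f_k(y)\in J$, so $f_k(x)J=f_k(y)J$.
\end{itemize}
The only subtlety is indexing: the definition requires $f_t$ only to be an epimorphism. Here it is even an i-epimorphism, with i-kernel $\{0\}$.

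\emph{Step $2\Rightarrow 3$.} This is exactly Theorem~\ref{teo:soluble-solution->GX-soluble} applied to $(X,r)=(B,r_B)$.

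\emph{Step $3\Rightarrow 4$.} By construction, $\mathcal{G}(B,r_B)\cong G_B/\Ker h$. Quotients of soluble skew braces are soluble, a standard fact from \cite{BallesterEstebanJimenezPerezC24-solubleskewbraces}: push the ideal series forward, and abelian sections map onto abelian sections.

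\emph{Step $4\Rightarrow 1$.} This is the step I expect to need the most care. The plan is to pass through $G_B$. Since $\Ker h\subseteq\Soc(G_B)$ and the socle is an abelian ideal, $\Ker h$ is an abelian ideal of $G_B$. As $G_B/\Ker h$ is soluble, $G_B$ is soluble; this uses that extensions of soluble skew braces by abelian ideals are soluble, again available from the earlier paper.

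Next, $B\cong G_B/\Ker\pi_B$ via the projection epimorphism $\pi_B$, and $\Ker\pi_B$ is an ideal by Lemma~\ref{lema:kerg-socle}. So $B$ is a quotient of the soluble skew brace $G_B$, hence soluble.

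The point to verify is that the closure properties used for solubility (closed under quotients, and under extensions by abelian ideals) hold for the notion of solubility defined in \cite{BallesterEstebanJimenezPerezC24-solubleskewbraces}. If the extension property is not directly available, a direct route is possible. Take a soluble series $\mathcal{G}(B,r_B)=\bar I_0\supseteq\cdots\supseteq\bar I_t=0$, and pull it back to $G_B$ to obtain ideals $I_k\supseteq\Ker h$. Append $\Ker h\supseteq 0$, which has abelian factor since $\Ker h\subseteq\Soc(G_B)$. Finally, push the resulting series forward via $\pi_B$.
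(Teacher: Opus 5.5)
Your proposal is correct and uses exactly the ingredients of the paper's proof. These are Example~\ref{ex:homomorfisme-brides} with the ideal-series characterisation \eqref{eq:cadenaideals}--\eqref{eq:abelianideal} for $1\Rightarrow 2$, Theorem~\ref{teo:soluble-solution->GX-soluble} for $2\Rightarrow 3$, $B\cong G_B/\Ker\pi_B$ to pass from $G_B$ to $B$, and $\Ker h\subseteq\Soc(G_B)$ being an abelian ideal to pass between $G_B$ and $\mathcal{G}(B,r_B)$. The differences are cosmetic: you arrange these as a single cycle (routing $4\Rightarrow 1$ through $G_B$), and you spell out the verification of Definition~\ref{def:soluble-solution} that the paper leaves implicit.
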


\begin{proof} 
$1$ implies $2$ is given by Example~\ref{ex:homomorfisme-brides} and the characterisation of solubility of skew braces given by~\eqref{eq:cadenaideals} and~\eqref{eq:abelianideal}. 

$2$ implies $3$ is given by Theorem~\ref{teo:soluble-solution->GX-soluble}.

$3$ implies $1$ is clear as $B$ is isomorphic to $G_B/\Ker \pi_B$, where $\pi_B\colon G_B \rightarrow B$ is the projection epimorphism onto~$B$.

$3$ if, and only if, $4$ follows from the fact that $\Ker h \subseteq \Soc(G_B)$ is an abelian ideal of $G_B$, where $h\colon G_B \rightarrow \mathcal{G}(B,r_B)$ is the previous homomorphism.
\end{proof}

The converse of Theorem~\ref{teo:soluble-solution->GX-soluble} is not true in general (see Example~\ref{ex:multipermut-nosoluble} below). Nevertheless, we give the following sufficient condition to get the converse of Theorem~\ref{teo:soluble-solution->GX-soluble}. This is the new version of Theorem~D in~\cite{BallesterEstebanJimenezPerezC24-solubleskewbraces} for the new definition of soluble solutions.

\begin{teorema}
\label{teo:G_X-soluble+xinI->Xsoluble}
Let $(X,r)$ be a solution with soluble structure skew brace $G_X$. If there exists an abelian ideal $I$ of $G_X$ such that $\iota(x) \in I$ for some $x\in X$, then $(X,r)$ is a soluble solution.
\end{teorema}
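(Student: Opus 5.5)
We take $f_k\colon X\to G_X/I_k$ to be $\iota$ followed by the quotient map, for a chain of ideals $I_k$ of $G_X$ chosen so that $\iota(x)$ lies in every ideal except the last one, $I_t=0$. Then the fibre of $f_k$ over the zero element of $G_X/I_k$ is an i-kernel, because $0$ is fixed by all $\lambda$'s and $\rho$'s and $\lambda_0=\rho_0=\mathrm{id}$.

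\textbf{Step 1 (the chain).} Since $G_X$ is soluble, so is the quotient $G_X/I$. Pull back a soluble series of $G_X/I$ (ideals with abelian successive factors, as in \eqref{eq:cadenaideals}--\eqref{eq:abelianideal}) to get ideals
\[ G_X=I_0\supseteq I_1\supseteq\cdots\supseteq I_{t-1}=I\supseteq I_t=0 \]
of $G_X$, with $I_{k-1}/I_k$ an abelian ideal of $G_X/I_k$ for every $k$. The last factor $I/0$ is abelian by hypothesis. The point of passing through $I$ is that $\iota(x)\in I\subseteq I_k$ for all $0\le k\le t-1$.

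\textbf{Step 2 (the maps).} Put $B_k:=G_X/I_k$ and $f_k:=p_k\circ\iota$, where $p_k\colon G_X\to B_k$ is the canonical projection; for $k=t$ this gives $B_t=G_X$ and $f_t=\iota$. Each $f_k$ is a homomorphism of solutions: $\iota$ is one by Theorem~4 of \cite{LuYanZhu00}, and $p_k$ is a skew brace homomorphism, hence a homomorphism of the associated solutions by Example~\ref{ex:homomorfisme-brides}. For $k\le t-1$, let
\[ X_k:=\{y\in X\mid \iota(y)\in I_k\}=f_k^{-1}(0), \]
which is nonempty because it contains $x$. To see that $X_k$ is an i-kernel, take a class $Z$ of $\ker f_k$, $y\in X_k$ and $z\in Z$. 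Using \eqref{eq:lambda-en-GX} and \eqref{eq:rho-en-GX}:
\begin{itemize}
\item $f_k(\lambda_y(z))=\lambda^{B_k}_0(f_k(z))=f_k(z)$, so $\lambda_y(z)\in Z$;
\item $f_k(\rho_z(y))=\rho^{B_k}_{f_k(z)}(0)=0$, so $\rho_z(y)\in X_k$.
\end{itemize}
Together with bijectivity of the $\lambda$'s and $\rho$'s this gives $r(X_k,Z)=Z\times X_k$, and $r(Z,X_k)=X_k\times Z$ is symmetric.

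\textbf{Step 3 (the conditions of Definition~\ref{def:soluble-solution}).}
\begin{itemize}
\item Condition 1: $\ker f_k\subseteq\ker f_{k-1}$ because $I_k\subseteq I_{k-1}$; $\ker f_0=\{X\}$ because $I_0=G_X$; and $\ker f_t=\ker\iota$.
\item Condition 2: take $J:=I_{k-1}/I_k$, an abelian ideal of $B_k$. Then $f_k(X_{k-1})\subseteq J$ by the definition of $X_{k-1}$. If $x'(\ker f_{k-1})y'$, then $\iota(x')I_{k-1}=\iota(y')I_{k-1}$, so $f_k(x')J=f_k(y')J$.
\end{itemize}

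\textbf{Main obstacle.} Definition~\ref{def:soluble-solution} asks the $f_k$ to be \emph{epimorphisms} onto skew braces, but $f_k(X)=\iota(X)I_k/I_k$ only generates $B_k$ and need not equal it. My first attempt is to check whether surjectivity is actually used anywhere:
\begin{itemize}
\item In Step 2, the i-kernel verification does not need it, since all the $\lambda$'s and $\rho$'s involved are evaluated at points of $f_k(X)$.
\item In the proof of Theorem~\ref{teo:soluble-solution->GX-soluble}, surjectivity enters only through Lemma~\ref{lema:kerbarf}.
\end{itemize}
If this check fails, I would try one of two repairs. The first is to replace $B_k$ by a smaller skew brace receiving $X$ onto. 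The second is to argue that the definition is meant up to this generation condition, since by Remark~\ref{nota:tots_hom_epi} the image $f_k(X)$ is in any case a solution inside $B_k$. I expect this point, rather than the construction itself, to be where care is needed.
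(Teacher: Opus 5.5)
Your construction is the paper's proof step for step. It uses a series of ideals through $I_{t-1}=I$ (your pull-back of a soluble series of $G_X/I$ only makes explicit what the paper takes for granted), $f_k(y)=\iota(y)I_k$ with $f_t=\iota$, and $X_k=f_k^{-1}(0)$. The i-kernel computation and the check of conditions~1 and~2 with $J=I_{k-1}/I_k$ are also the paper's.

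One small precision. Your computation gives only $r(X_k,Z)\subseteq Z\times X_k$, and bijectivity of the individual $\lambda$'s and $\rho$'s does not by itself give equality. Use bijectivity of $r$, the fact invoked in the proof of Theorem~\ref{teo:X0-kerf-ideal}. If $(z',y')=r(a,b)$ with $z'\in Z$, $y'\in X_k$, then $\rho^{B_k}_{f_k(b)}(f_k(a))=0$ forces $f_k(a)=0$, and then $f_k(b)=\lambda^{B_k}_0(f_k(b))=f_k(z')$.

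The obstacle you flag is genuine, and the paper's proof passes over it in silence. It uses exactly these maps, although $f_t=\iota$ is never onto $G_X$: the defining relations are homogeneous, so $x\mapsto 1$ extends to a homomorphism $(G_X,\cdot)\to\mathbb{Z}$, and the identity $0$ is not in $\iota(X)$. Your first repair cannot succeed, because with \emph{epimorphism} read literally in Definition~\ref{def:soluble-solution} the statement is false. If $\iota$ is injective, then $\ker f_t\subseteq\ker\iota$ makes $f_t$ injective, so a surjective $f_t$ is an isomorphism $(X,r)\cong(B_t,r_{B_t})$.

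Now take the involutive solution on $X=\{1,2,3\}$ with $\lambda_1=\mathrm{id}$, $\lambda_2=\lambda_3=(2\,3)$ and $\rho_y(x)=\lambda^{-1}_{\lambda_x(y)}(x)$. The cycle-set condition $\lambda_x\lambda_{\lambda_x^{-1}(y)}=\lambda_y\lambda_{\lambda_y^{-1}(x)}$ is immediate to check, and one gets $\rho_1=\mathrm{id}$, $\rho_2=\rho_3=(2\,3)$. The solution is injective (being involutive) and $\iota(1)\in\Soc(G_X)$. Moreover $G_X$ is soluble, because $\Soc(G_X)$ is an abelian ideal with $|G_X/\Soc(G_X)|=|\langle(2\,3)\rangle|=2$. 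So all hypotheses hold. Yet $B_t$ would be a skew brace of order~$3$, hence trivial, whose solution is the twist, while $\lambda_2\neq\mathrm{id}$.

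Hence the theorem is true only under your second reading, in which the $f_k$ are merely (i-)homomorphisms into skew braces, with $f_k(X)$ generating $B_k$ if you wish. This is what the paper's proof tacitly uses, and under it your argument is complete, since surjectivity is used nowhere in it. This reading has a cost elsewhere. The proof of Theorem~\ref{teo:soluble-solution->GX-soluble} applies Lemma~\ref{lema:kerbarf} to $f_{k-1}$ and $f_t$, but that lemma is proved only for epimorphisms: it lifts relators of $G_Y$ letter by letter through $f$. So that step would need a separate argument.
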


\begin{proof}
Write $G:= G_X$. Since $G$ is a soluble skew brace and $I$ is an abelian ideal of $G$, we can consider an ideal series
\[ 0 = I_t \subseteq I_{t-1} \subseteq \ldots \subseteq I_1 \subseteq I_0 = G,\]
with $I_{t-1} = I$, and $I_{k-1}/I_k$ an abelian skew brace for every $1 \leq k \leq t$. For every $0\leq k \leq t-1$ take 
\[ X_{k} := \{x \in X \mid \iota(x) \in I_k\} \neq \emptyset \]
as $\iota(x) \in X_{t-1} \neq \emptyset$. Thus, we obtain a chain of subsets
\[ X_{t-1} \subseteq \ldots \subseteq X_1 \subseteq X_0 = X.\]

Fix $0 \leq k \leq  t$. Recall that  $\iota \colon X \rightarrow G$ satisfies that $r_G(\iota \times \iota) = (\iota\times \iota)r$, where $(G,r_G)$ is the associated solution to the skew brace structure~$G$. Then, the map $f_k\colon X \rightarrow G/I_k$, given by $f_k(x) = \iota(x)I_k$ is a homomorphism of solutions $(X,r)$ and $(G/I_k, r_{G/I_k})$. In particular, $f_t = \iota \colon X \rightarrow G_X$. 

Suppose that $0 \leq k \leq t-1$. For every $x \in X_k$,
\[ \{y \in X\mid f_k(x) = f_k(y)\} = \{y \in X \mid I_k = \iota(y)I_k\} =  X_k,\]
i.e. $X_k$ is an equivalence class of $\ker f_k$. Moreover, since $I_k$ is an ideal of~$G$, it is $\lambda^G$ and $\rho^G$-invariant, and $\lambda^G_w(w')I_k = w'I_k$ and $\rho^G_w(w') = w'I_k$ for every $w\in I_k$ and $w'\in G$. Therefore, it follows that for every equivalence class $Z$ of the relation $\ker f_k$,
\begin{align*}
\iota(\lambda_z(x))I_k & = \lambda_{\iota(z)}^G(\iota(x)) I_k  = I_k\\
\iota(\lambda_x(z))I_k & = \lambda_{\iota(x)}^G(\iota(z)) I_k = \iota(z)I_k\\
\iota(\rho_z(x))I_k & = \rho_{\iota(z)}^G(\iota(x)) I_k  = I_k\\
\iota(\rho_x(z))I_k & = \rho_{\iota(x)}^G(\iota(z)) I_k = \iota(z)I_k
\end{align*}
for every $x\in X_k$ and $z\in Z$. Thus, $r(X_k,Z) = Z \times X_k$ and $r(Z, X_k) = X_k \times Z$ for every equivalence class $Z$ of $\ker f_k$. Hence, $f_k$ is an i-homomorphism and $X_k$ is an i-kernel of~$f_k$.

Clearly, for every $1\leq k \leq t$, $\ker f_k \subseteq \ker f_{k-1}$, $\ker f_0 = \{X\}$ and $\ker f_t = \ker \iota$. In addition, if $1 \leq k \leq t$, then 
\[ f_k(X_{k-1}) = \{\iota(x)I_k \mid x \in X_{k-1}\} = \{\iota(x)I_k\mid \iota(x) \in I_{k-1}\} \subseteq I_{k-1}/I_k,\]
which is an abelian ideal of $G/I_k$, and for every $x,y\in X$, 
\[ f_k(x)I_{k-1}/I_k = f_k(y)I_{k-1}/I_k \Leftrightarrow \iota(x)I_{k-1} = \iota(y)I_{k-1}\Leftrightarrow  f_{k-1}(x) = f_{k-1}(y).\]
Hence, we conclude that $(X,r)$ is a soluble solution.
\end{proof}

Theorem~\ref{teo:G_X-soluble+xinI->Xsoluble} opens the door to explore the relation between soluble solutions and multipermutation solutions. Let $(X,r)$ be a solution, and consider the equivalence relation $\sim$ on $X$ defined as $x\sim y$ if $\lambda_x = \lambda_y$ and $\rho_x = \rho_y$. The so-called \emph{retraction solution} $\Ret(X,r) = (X/\sim,\bar{r})$ arises, where $\bar{r}$ is defined by
\[ \bar{r}([x], [y]) = ([\lambda_x(y)], [\rho_y(x)]), \quad \text{for all $[x], [y] \in X/ \sim$}.\] 
We can iterate this process and define inductively
\begin{align*}
\Ret^1(X, r) & = \Ret(X,r),\\
\Ret^{n+1}(X,r) & = \Ret(\Ret^n(X,r)), \quad \text{for all $n\geq 1$.}
\end{align*}
A solution $(X,r)$ is said to be a \emph{multipermutation solution of level} $m$, if $m$ is the smallest natural such that $\Ret^m(X,r)$ is a singleton solution. A singleton solution is multipermutation of level~$0$. It is well-known that a solution is multipermutation if, and only if, $G_X$ and $\mathcal{G}(X,r)$ are right nilpotent of nilpotent type, and in addition, right nilpotent skew braces are soluble (cf.~\cite{BallesterEstebanJimenezPerezC24-solubleskewbraces}).

\begin{corolari}
\label{cor:X-lambda_x=rho_x=id->soluble}
Let $(X,r)$ be a solution with soluble structure skew brace~$G_X$. If there exists $x\in X$ such that $\lambda_x = \rho_x = \mathrm{id}_X$, then $(X,r)$ is a soluble solution.
\end{corolari}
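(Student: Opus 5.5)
The plan is to reduce the statement to Theorem~\ref{teo:G_X-soluble+xinI->Xsoluble}. We will exhibit an abelian ideal $I$ of $G_X$ that contains $\iota(x)$. The natural candidate is $I:=\langle \iota(x)\rangle_{\bigcdot}$, the cyclic subgroup generated by $\iota(x)$, in direct analogy with the role of $\langle \bar 0\rangle_{\bigcdot}$ in Lemma~\ref{lema:kerg-socle}.

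First, we use the hypothesis $\lambda_x=\rho_x=\mathrm{id}_X$. Taking $y\in X$ arbitrary, the defining relation in \eqref{eq:G(X,r)prod} gives $xy=\lambda_x(y)\rho_y(x)=y\rho_y(x)$. We also need $\rho_y(x)=x$ and $\lambda_y(x)=x$ for all $y$. For this we use the bijectivity of $r$ (as cited in the proof of Theorem~\ref{teo:X0-kerf-ideal}) together with non-degeneracy. Since $r(x,z)=(z,x)$ for every $z$, the map $r$ sends $\{x\}\times X$ bijectively onto $X\times\{x\}$. Hence no pair $(y,z)$ with $y\neq x$ is sent by $r$ into $X\times\{x\}$, so $\rho_z(y)\neq x$ whenever $y\neq x$. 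Because $\rho_z$ is bijective, this forces $\rho_z(x)=x$. The symmetric argument on $X\times\{x\}$ and $\{x\}\times X$ gives $\lambda_z(x)=x$. Consequently $xy=yx$ in $(G_X,\cdot)$, and by \eqref{eq:G(X,r)sum} also $x+y=y+x$ in $(G_X,+)$, since $x+\lambda_x(y)=x+y$ and $\lambda_x(y)+\lambda_{\lambda_x(y)}(\rho_y(x))=y+\lambda_y(x)=y+x$. Therefore $\iota(x)$ is central in both groups.

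Next, by \eqref{eq:lambda-en-GX} and \eqref{eq:rho-en-GX}, the maps $\lambda^{G_X}_{x^{\pm1}}$ and $\rho^{G_X}_{x^{\pm1}}$ fix every generator. Using \eqref{eq:lambda-prod-GX} and \eqref{eq:rho-prod-GX}, exactly as in Lemma~\ref{lema:kerg-socle}, they fix all of $G_X$. Moreover, $\lambda^{G_X}_w(x)=x$ and $\rho^{G_X}_w(x)=x$ for every $w\in G_X$. As a result, $x^n=nx$: the additive and multiplicative cyclic subgroups coincide, the ideal $I$ is $\lambda$- and $\rho$-invariant, and it is normal in both groups. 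Hence $I$ is an ideal contained in $\Soc(G_X)$, and it is abelian as a skew brace because it is cyclic with trivial $\lambda$.

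Finally, since $\iota(x)\in I$ and $G_X$ is soluble, Theorem~\ref{teo:G_X-soluble+xinI->Xsoluble} yields that $(X,r)$ is soluble. The main obstacle is the middle step: showing that $x$ is fixed by all $\lambda_y$ and $\rho_y$, which relies on the bijectivity of $r$. Once that holds, the remaining steps are the routine extension to all of $G_X$ via the product formulas.
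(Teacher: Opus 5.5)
\textbf{There is a genuine gap.} Reducing to Theorem~\ref{teo:G_X-soluble+xinI->Xsoluble} is the right frame, but your middle step is circular, and its conclusion is false. So $\langle\iota(x)\rangle_{\bigcdot}$ is not an ideal in general.

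From $\lambda_x=\rho_x=\mathrm{id}_X$ you only get $r(x,z)=(z,\rho_z(x))$ and $r(z,x)=(\lambda_z(x),z)$. Writing $r(x,z)=(z,x)$ already assumes $\rho_z(x)=x$, which is exactly what you set out to prove.

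Here is a concrete counterexample. Take $X=\{1,2,3,4\}$ and the involutive solution with $\lambda_1=\lambda_3=\mathrm{id}_X$, $\lambda_2=\lambda_4=(1\,3)$ and $\rho_y(x)=\lambda^{-1}_{\lambda_x(y)}(x)$. One checks the condition $\lambda_x\lambda_{\lambda_x^{-1}(y)}=\lambda_y\lambda_{\lambda_y^{-1}(x)}$, so this is a solution. Then $\rho_1=\rho_3=\mathrm{id}_X$ and $\rho_2=\rho_4=(1\,3)$, so $x=1$ satisfies the hypothesis. The solution is multipermutation of level~$2$, so $G_X$ is soluble. Yet $r(1,2)=(2,3)$ and $r(2,1)=(3,2)$, so $\lambda_2(1)=\rho_2(1)=3\neq 1$.

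In $G_X$ the relation $2\cdot 1=3\cdot 2$ gives $\iota(2)\iota(1)\iota(2)^{-1}=\iota(3)$. This element is not in $\langle\iota(1)\rangle_{\bigcdot}=\mathbb{Z}\,\iota(1)$, because $(G_X,+)$ is free abelian on $X$. Hence $\langle\iota(x)\rangle_{\bigcdot}$ need not be normal in $(G_X,\cdot)$. Your claims that $\iota(x)$ is multiplicatively central and that $\lambda^{G_X}_w(x)=x$ for all $w$ both fail.

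\textbf{The fix.} What is true, and what the paper uses, is that $\iota(x)\in\Soc(G_X)$, and $\Soc(G_X)$ is an abelian ideal. Socle elements need not be central in the multiplicative group. The paper deduces $\lambda^{G_X}_{\iota(x)}=\rho^{G_X}_{\iota(x)}=\mathrm{id}_{G_X}$ from \eqref{eq:lambda-prod-GX} and \eqref{eq:rho-prod-GX}, hence $\iota(x)\in\Soc(G_X)$.

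You can also see this directly, in two steps.
\begin{enumerate}
\item $\lambda^{G_X}_{\iota(x)}$ is an automorphism of $(G_X,+)$ fixing the generators $\iota(X)$, by \eqref{eq:lambda-en-GX}. So it is the identity.
\item Relation \eqref{eq:G(X,r)sum} gives $x+y=y+\lambda_y(\rho_y(x))$. In $G_X$ we have $\rho_y(x)=y^{-1}xy$, from $xy=y\rho_y(x)$. The YBE gives $\rho_{\rho_y(x)}=\mathrm{id}_X$ (use $\rho_z\rho_y=\rho_{\rho_z(y)}\rho_{\lambda_y(z)}$ with $y=x$). Hence $\lambda_y(\rho_y(x))=y\rho_y(x)y^{-1}=x$ in $G_X$, so $\iota(x)$ commutes additively with every generator.
\end{enumerate}
Thus $\iota(x)\in\Ker\lambda\cap Z(G_X,+)=\Soc(G_X)$. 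Replacing your $I$ by $\Soc(G_X)$ lets Theorem~\ref{teo:G_X-soluble+xinI->Xsoluble} finish the proof.
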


\begin{proof}
Observe that if $\lambda_x = \rho_x = \mathrm{id}_X$, then by equations~\eqref{eq:lambda-prod-GX} and~\eqref{eq:rho-prod-GX}, it turns out that $\lambda_{\iota(x)}^{G_X} = \rho_{\iota(x)}^{G_X} = \mathrm{id}_{G_X}$. Thus, $\iota(x) \in \Soc(G_X)$, and $\Soc(G_X)$ is an abelian ideal of $G_X$. Theorem~\ref{teo:G_X-soluble+xinI->Xsoluble} yields the result.
\end{proof}

\begin{corolari}[{\cite[Corollary~3]{BallesterEstebanJimenezPerezC24-solubleskewbraces}}]
\label{cor:sol_mult->sol_sol}
Let $(X,r)$ be a multipermutation solution. If $\Soc(G_X) \cap \iota(X) \neq \emptyset$, then $(X,r)$ is a soluble solution.
\end{corolari}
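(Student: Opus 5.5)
The plan is to reduce the statement to Theorem~\ref{teo:G_X-soluble+xinI->Xsoluble}, taking $I=\Soc(G_X)$ as the abelian ideal. This needs two facts: that $G_X$ is soluble, and that some $\iota(x)$ lies in an abelian ideal of $G_X$.

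\textbf{Solubility of $G_X$.} Since $(X,r)$ is a multipermutation solution, $G_X$ is a right nilpotent skew brace of nilpotent type; this is the well-known characterisation recalled just before Corollary~\ref{cor:X-lambda_x=rho_x=id->soluble}. As recalled there as well, right nilpotent skew braces are soluble (cf.~\cite{BallesterEstebanJimenezPerezC24-solubleskewbraces}). Hence $G_X$ is a soluble skew brace.

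\textbf{The abelian ideal.} By hypothesis there is some $x\in X$ with $\iota(x)\in \Soc(G_X)$. It is standard that $\Soc(G_X)$ is an ideal of $G_X$. It is also an abelian skew brace: for $a,b\in\Soc(G_X)$ we have $\lambda_a=\mathrm{id}$, so $ab=a+\lambda_a(b)=a+b$. Moreover, $(\Soc(G_X),+)$ is contained in the centre of $(G_X,+)$, so the sum restricted to the socle is commutative. Therefore the two operations coincide on $\Soc(G_X)$ and are abelian.

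\textbf{Conclusion.} Applying Theorem~\ref{teo:G_X-soluble+xinI->Xsoluble} with $I=\Soc(G_X)$ and this element $x$ yields that $(X,r)$ is a soluble solution.

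\textbf{Remark on an alternative route.} Alternatively, one could try to show that $\iota(x)\in\Soc(G_X)$ forces $\lambda_x=\rho_x=\mathrm{id}_X$ via~\eqref{eq:lambda-en-GX} and~\eqref{eq:rho-en-GX}, and then invoke Corollary~\ref{cor:X-lambda_x=rho_x=id->soluble}. That step could fail when $\iota$ is not injective, so I will not rely on it. Citing Theorem~\ref{teo:G_X-soluble+xinI->Xsoluble} directly avoids the issue.

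\textbf{Main obstacle.} The only point that is not purely formal is the implication from multipermutation to solubility of $G_X$. It rests on the cited characterisation of multipermutation solutions through right nilpotency of $G_X$, together with the fact that right nilpotent skew braces are soluble. Both are quoted from the literature rather than proved here.
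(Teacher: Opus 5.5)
Your proposal is correct and follows the route the paper intends: multipermutation makes $G_X$ right nilpotent of nilpotent type, hence soluble, and $\Soc(G_X)$ is an abelian ideal meeting $\iota(X)$, so Theorem~\ref{teo:G_X-soluble+xinI->Xsoluble} applies just as in the proof of Corollary~\ref{cor:X-lambda_x=rho_x=id->soluble}. One small precision: the solubility fact should be invoked for right nilpotent skew braces \emph{of nilpotent type}, which is what you have, because a trivial skew brace on a non-soluble group is right nilpotent but not soluble.
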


The following example shows that condition $\iota(x) \in I$, with $I$ an abelian ideal of $G_X$ is essential, as some multipermutation solutions can be simple.
\begin{exemple}[{\cite[Example~4]{BallesterEstebanJimenezPerezC24-solubleskewbraces}}]
\label{ex:multipermut-nosoluble}
Consider the set $X = \{1,2,3\}$ and the solution $(X,r)$ of previous Example~\ref{ex:contraex-Xsimple-GXnosimple}. Since it is a Lyubashenko solution,  $(X,r)$ has multipermutational level $1$, as $\Ret(X,r)$ is a singleton solution. Therefore, $G_X$ is right nilpotent, and therefore, a soluble skew brace. Nevertheless, we have seen that $(X,r)$ is a simple solution which can not be soluble.
\end{exemple}

\section*{Acknowledgements}
The first, second, and fourth authors are supported by the Ministerio de Ciencia, Innovación y Universidades, Agencia Estatal de Investigación, FEDER (grant: PID2024-159495NB-I00), and the Conselleria d'Educació, Universitats i Ocupació, Generalitat Valenciana (grant: \mbox{CIAICO/2023/007}).

\end{document}